\newtheorem{theorem}{Theorem}
\newtheorem{lemma}[theorem]{Lemma}
\newtheorem{corollary}[theorem]{Corollary}
\newtheorem{proposition}[theorem]{Proposition}
\newtheorem*{conjecture}{Conjecture}
\newtheorem*{remark}{Remark}
\crefname{theorem}{Theorem}{Theorems}
\crefname{lemma}{Lemma}{Lemmas}
\crefname{corollary}{Corollary}{Corollaries}
\crefname{proposition}{Proposition}{Propositions}
\crefname{conjecture}{Conjecture}{Conjectures}
\theoremstyle{definition}
\newtheorem*{definition}{Definition}
\numberwithin{theorem}{section}
\newcommand{\IC}{\mathrm{IC}}
\newcommand{\Eu}{\mathrm{Eu}}
\newcommand{\rk}{\mathrm{rk}}
\title{Microlocal multiplicity of matroid Schubert varieties}
\author{Yiyu Wang}
\address{Department of Mathematics, University of Wisconsin - Madison, 480 Lincoln Drive, 213 Van Vleck Hall, Madison, WI 53706}
\email{yiyu.wang@wisc.edu}
\date{\today}
\begin{document}

\begin{abstract}
    We study the multiplicity number of the characteristic cycle of the intersection complex of the matroid Schubert variety. It is shown to be a combinatorial invariant, and it can be computed by explicit formulas. We also conjecture that the generalization to an arbitrary matroid is nonnegative. 
\end{abstract}

\maketitle

\section{Introduction}
Many statements on matroids are proved first in the realizable case by geometric methods or constructions, and then proved in the non-realizable case by combinatorial means. For example, the log-concavity of the characteristic polynomials \cite{10.4007/annals.2018.188.2.1}, the non-negativity of the coefficients of the Kazhdan-Lusztig polynomial \cite{BRADEN2022108646}\cite{braden2023singularhodgetheorycombinatorial}, and most recently the top-heavy conjecture \cite{braden2023singularhodgetheorycombinatorial}. In this paper, we will introduce a new invariant of a matroid which comes from the singularity of the matroid Schubert variety in the realizable case. This new invariant is automatically nonnegative in the realizable case, and we conjecture that it is also nonnegative in the non-realizable case.

Let us introduce the new invariant. Let $V\subset \mathbb{C}^{n}$ be a $d$-dimensional subspace. Throughout the paper, we assume $V$ is not contained in any coordinate hyperplane $\{x_i=0\}$. 
The intersection of $V$ with each coordinate hyperplane $H_i:\{x_i=0\}$ defines a hyperplane arrangement $\mathcal{A}$ in $V$. Let $M=M(\mathcal{A})$ be the matroid associated with the ground set $E=\{0,1,2,\cdots,n-1\}$. The closure of $V$ in $(\mathbb{P}^1)^{n}$ is the so-called matroid Schubert variety of $\mathcal{A}$, denoted by $Y_\mathcal{A}$. Consider the intersection complex $\IC_{Y_\mathcal{A}}$ of $Y_\mathcal{A}$ with middle perversity (all the sheaves we consider in the paper are with middle perversity). The characteristic cycle of $\IC_{Y_\mathcal{A}}$ can be written as a nonnegative linear combination of conormal varieties:
\[
    \mathrm{CC}(\IC_{Y_\mathcal{A}})=\sum_{F}m_F T^*_{Y_F}(\mathbb{P}^1)^{n}
\]
where the sum is over all flats $F\in \mathcal{L}(M)$, the lattice of all flats of $M$, $\{Y_F\}$ is the stratification of $Y_\mathcal{A}$ as in \cite[Proof of Theorem 14]{10.4310/ACTA.2017.v218.n2.a2}. 
In particular, $m_M$ is the coefficient of $T^*_{[\infty, \cdots, \infty]}(\mathbb{P}^1)^{n}$ in $\mathrm{CC}(\IC_{Y_\mathcal{A}})$.
Since $\IC_{Y_\mathcal{A}}$ is a perverse sheaf, $m_M$ is automatically nonnegative. Our main result is that $m_M$ is a combinatorial invariant and can be computed by an explicit formula.

\begin{theorem}\label{Microlocal_multiplicities}
    For a rank $d$ loopless matroid $M=M(\mathcal{A})$ realized by a hyperplane arrangement $\mathcal{A}$, the multiplicity of $\mathrm{CC}(\IC_{Y_\mathcal{A}})$ at the conormal variety to the all-infinity point is 
    $$m_M=(-1)^{d} \sum_{F\in \mathcal{L}(M)}2^{\rk F}\chi_{M^F}(1/2)P_{M_F}(1),$$ where $\mathcal{L}(M)$ denotes the set of all flats of $M$, $\chi_M$ is the characteristic polynomial of $M$, $P_M$ is the Kazhdan-Lusztig polynomial of $M$, $M^F$ and $M_F$ denotes the localization and the contraction of $M$ at the flat $F$ respectively.
\end{theorem}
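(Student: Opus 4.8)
The plan is to reduce the statement to a local microlocal computation at the point $p=[\infty,\dots,\infty]$ and then to feed in the known intersection cohomology of matroid Schubert varieties together with the combinatorics of $\mathcal L(M)$. Since $m_M$ is the coefficient of $T^*_{\{p\}}(\mathbb{P}^1)^n$ in $\mathrm{CC}(\IC_{Y_\mathcal A})$, it depends only on the germ of $\IC_{Y_\mathcal A}$ at $p$. In the chart $y_i=1/x_i$ this germ is the germ at the origin of the reciprocal plane $X_M=\overline{\{(1/x_i)_i:x\in V,\ x_i\neq 0\ \forall i\}}\subseteq\mathbb{C}^{E}$, a $d$-dimensional affine cone with vertex the origin $0$ (corresponding to $p$), stratified by the traces of the $Y_F$. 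So I would first establish that $m_M$ is the multiplicity of $\mathrm{CC}(\IC_{X_M})$ along $T^*_0\mathbb{C}^{E}$.

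Because $\{0\}$ is the minimal stratum, this multiplicity is a single vanishing-cycle Euler characteristic: for a generic linear form $\ell$ with $\ell(0)=0$ one has $m_M=-\chi\big((\phi_\ell\IC_{X_M})_0\big)$. Using the $\mathbb{G}_m$-bundle $\pi\colon X_M\setminus 0\to Z_M:=\mathbb{P}(X_M)$ and the resulting identification $\IC_{X_M}|_{X_M\setminus 0}\cong\pi^*\IC_{Z_M}[1]$, the Milnor fibre of $\ell$ is a section of $\pi$ over $Z_M$ minus a generic hyperplane section, while the stalk of $\IC_{X_M}$ at $0$ has total dimension $P_M(1)$ by the identification of matroid Kazhdan--Lusztig polynomials with the local intersection cohomology of $X_M$. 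Collecting these, $m_M=(-1)^dP_M(1)+\chi_{IH}(Z_M\setminus Z_{M,\mathrm{gen}})$, where $\chi_{IH}$ denotes the Euler characteristic of hypercohomology with $\IC$-coefficients and $Z_{M,\mathrm{gen}}$ is a generic hyperplane section of $Z_M$.

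Next I would make the correction term combinatorial by iterating generic hyperplane sections: a generic hyperplane section of $Z_M$ is the projectivised reciprocal plane of the truncation of $M$ (the matroid of a generic hyperplane section of $V$), so $\chi_{IH}(Z_M\setminus Z_{M,\mathrm{gen}})$ is expressible through the quantities $\chi_{IH}(Z_{M'})$ for minors $M'$ obtained from $M$ by truncation and by restriction to flats. These are controlled by the known description of the intersection cohomology of matroid Schubert varieties and their strata — the stalks along $Y_F$ are governed by Kazhdan--Lusztig polynomials of minors, and the closure $\overline{Y_F}$ is again a matroid Schubert variety (of a contraction) — and the repeated factors $2^{\rk F}=\chi((\mathbb{P}^1)^{\rk F})$ come from the Euler characteristics of the $(\mathbb{P}^1)$-factors, indexed by flats $F$, along which the coordinates degenerate to $\infty$ on $\overline{Y_F}$. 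Equivalently, one may run the whole argument through Kashiwara's local index theorem: expand the stalk Euler-characteristic function of $\IC_{Y_\mathcal A}$ in the basis of local Euler obstructions $\Eu_{\overline{Y_F}}$, and invert the resulting triangular matrix over $\mathcal L(M)$, which is determined by the Euler obstructions of matroid Schubert varieties at their all-infinity points.

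Finally I would repackage the outcome using standard matroid identities — expanding $\chi_{M^F}$ via the Möbius function of $[\hat 0,F]$, the identity $\sum_{G\ge F}P_{M_G}(1)=Z_{M/F}(1)$ for the $Z$-polynomial, and the defining recursion of the Kazhdan--Lusztig polynomial of the contraction — to obtain the displayed formula, the prefactor $(-1)^d$ being the normalization of $\IC$. I expect the main obstacle to be exactly this combinatorial core: pinning down the intersection cohomology of projectivised reciprocal planes under generic hyperplane sections (equivalently, the local Euler obstructions of matroid Schubert varieties at all strata) and keeping the restriction-minors $M^F$ separated from the contraction-minors $M_F$, so that the two factors $\chi_{M^F}(1/2)$ and $P_{M_F}(1)$ emerge cleanly rather than entangled; by contrast, the microlocal reduction in the first two steps is formal, and essentially all the work is in this bookkeeping.
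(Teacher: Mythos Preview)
Your high-level reduction is correct and in fact coincides with the paper's: expanding the stalk Euler-characteristic function of $\IC_{Y_\mathcal A}$ in the basis of local Euler obstructions $\Eu_{\overline{Y_F}}$ and inverting the triangular system over $\mathcal L(M)$ is exactly what the paper does (your ``Kashiwara's local index theorem'' is the paper's $\Eu$ isomorphism, and the stalk input $(-1)^d P_{M_G}(1)$ is the same). But all of the content of the theorem lies in that inversion, and here your proposal has a genuine gap. The paper's route is: (i) Aluffi's cone formula expresses $\Eu_M(\emptyset)$ as an alternating sum of degrees of the Chern--Mather class of $\mathbb P(X_M)$; (ii) these degrees are computed via the CSM cycles of matroids, yielding a recursion $\Eu_M=\sum_{F\neq\emptyset}c_{M^F}\Eu_{M_F}$ with $c_M=(-1)^{d-1}\sum_{\mathscr F}\beta(M)[\mathscr F]$ a sum over descending flags of flats; (iii) a M\"obius-function identity then gives the closed form $c_M=-2^{\rk M}\chi_M(1/2)$. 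The factor $2^{\rk F}\chi_{M^F}(1/2)$ in the target formula is precisely $-c_{M^F}$, i.e.\ an entry of the inverse matrix; it does not arise from $Z$-polynomial identities, the Kazhdan--Lusztig recursion, or Euler characteristics of $(\mathbb P^1)$-factors as you suggest, and nothing you list produces an evaluation of a characteristic polynomial at $1/2$.

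Your alternative route through generic hyperplane sections also fails at the first step: a generic hyperplane section of $Z_M=\mathbb P(X_M)$ is \emph{not} the projectivised reciprocal plane of the truncation $T(M)$. In reciprocal coordinates $y_i=1/x_i$ the hyperplane condition $\sum a_i y_i=0$ becomes $\sum a_i/x_i=0$, which is not linear in $x$, so $X_M\cap\{\sum a_i y_i=0\}$ is not of the form $X_{M'}$ for any matroid $M'$ and the proposed iteration over truncations cannot begin. To make the proposal go through you would need an independent computation of the local Euler obstructions of matroid Schubert varieties (the paper's answer is $\Eu_M=\chi_M(2)$, proved via Aluffi's formula and matroid CSM classes), and none of the mechanisms you name produces this.
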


The above formula makes sense even when $M$ is not realizable over $\mathbb{C}$. Thus, for an arbitrary matroid $M$, we call $m_M$ the microlocal multiplicity of $M$. It is clearly a valuative invariant of a matroid. We conjecture that it is always nonnegative. 
\begin{conjecture}
    For a loopless matroid $M$, $m_M$ is always nonnegative. That is, 
    \[
        (-1)^{d} \sum_{F\in \mathcal{L}(M)}2^{\rk F}\chi_{M^F}(1/2)P_{M_F}(1)\geq 0.
    \]
    Moreover, $m_M=0$ if and only if $M$ has a Boolean summand, that is, $M$ is a direct sum of a Boolean matroid and another arbitrary matroid.
\end{conjecture}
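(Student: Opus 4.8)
The plan is to extract a multiplicativity property of $m$ that reduces the conjecture to a single strict-positivity statement for connected matroids, and then to prove that statement by interpreting $m_M$ as a microlocal, ultimately Hodge-theoretic, invariant.

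\emph{Step 1: reduction to connected matroids.} I would first record that $m$ is multiplicative under direct sums, $m_{M_1\oplus M_2}=m_{M_1}\,m_{M_2}$, which is immediate from \Cref{Microlocal_multiplicities} together with $\chi_{M_1\oplus M_2}=\chi_{M_1}\chi_{M_2}$, $P_{M_1\oplus M_2}=P_{M_1}P_{M_2}$, and the identification $\mathcal{L}(M_1\oplus M_2)\cong\mathcal{L}(M_1)\times\mathcal{L}(M_2)$ under which localization, contraction and rank are additive. Since $\chi_{M^F}$, $P_{M_F}$ and $\rk F$ depend only on the geometric lattice $\mathcal{L}(M)$, so does $m_M$; in particular $m_M=m_{\mathrm{simp}(M)}$. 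A one-line computation gives $m_{U_{1,1}}=-\bigl(1\cdot1\cdot1+2\cdot(-\tfrac12)\cdot1\bigr)=0$ and $m_{\emptyset}=1$. Writing $\mathrm{simp}(M)=N_1\oplus\cdots\oplus N_s$ with each $N_i$ connected and simple, each $N_i$ is either $U_{1,1}$ (with $m_{N_i}=0$) or connected, simple and of rank at least $2$, and $\mathrm{simp}(M)$ has a Boolean summand precisely when some $N_i=U_{1,1}$. Hence, modulo the assertion
\[
m_N>0\quad\text{for every connected simple matroid }N\text{ of rank }\ge 2,
\]
we get $m_M\ge 0$ for all loopless $M$, with $m_M=0$ iff $\mathrm{simp}(M)$ has a Boolean summand — exactly the stated conjecture when $M$ is simple (in general it should read ``iff $\mathrm{simp}(M)$ has a Boolean summand'', since e.g.\ $M=U_{1,2}\oplus U_{2,3}$ has $m_M=0$ without literally having a Boolean summand). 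It is also worth recording, by Möbius inversion on $\mathcal{L}(M)$, the identity $m_M=(-1)^{d}\sum_{F\in\mathcal{L}(M)}\mu_M(\hat 0,F)\,2^{\rk F}\,Z_{M/F}(1)$ with $Z$ the Kazhdan--Lusztig $Z$-polynomial; this makes valuativeness and multiplicativity transparent, but since $\mu_M(\hat 0,F)$ has sign $(-1)^{\rk F}$ the terms still alternate with $(-1)^{\rk M-\rk F}$ and display no manifest positivity.

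\emph{Step 2: the realizable case.} For $N$ realizable, $m_N\ge 0$ is automatic since $\IC_{Y_N}$ is perverse, so the content is non-vanishing. I would compute the coefficient of $T^*_{[\infty,\dots,\infty]}(\mathbb{P}^1)^n$ in $\mathrm{CC}(\IC_{Y_N})$ by the microlocal index formula: this multiplicity is the dimension of the normal Morse datum of $\IC_{Y_N}$ at the all-infinity point with respect to a generic covector — a genuine vector space, concentrated in a single degree because $\IC_{Y_N}$ is perverse — whence $m_N\ge 0$, with equality iff that space vanishes. Using that the transverse slice to a stratum $Y_F$ is again a matroid Schubert variety, this normal Morse datum is the reduced intersection cohomology of the complex link of the all-infinity point, which is nonzero exactly when $Y_N$ fails, near that point, to be an analytic product of a smooth factor with a lower-dimensional matroid Schubert variety; this in turn is equivalent to $N$ being connected of rank $\ge 2$ (if $N$ has a coloop or $N=U_{1,n}$, the relevant slice splits off a smooth $\mathbb{P}^1$, forcing $m_N=0$, consistently with Step 1). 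The required strict positivity then follows from the Hard Lefschetz / Poincaré duality package for $\mathrm{IH}^*(Y_N)$, which identifies the Euler characteristic in the index formula with a single nonnegative — here nonzero — graded Betti number.

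\emph{Step 3: removing realizability, and the main obstacle.} For arbitrary $M$ I would transport Step 2 into the combinatorial singular Hodge theory of matroids of Braden--Huh--Matherne--Proudfoot--Wang \cite{braden2023singularhodgetheorycombinatorial}: their intersection cohomology module of the augmented Chow ring of $M$ is a combinatorial avatar of $\IC_{Y_M}$ defined for every matroid and satisfying Poincaré duality and Hard Lefschetz. From it one constructs a combinatorial ``normal Morse datum'' module at the deepest flat, proves (via Hard Lefschetz) that its Euler characteristic equals $(-1)^{d}\sum_F 2^{\rk F}\chi_{M^F}(1/2)P_{M_F}(1)$ and collapses to the dimension of a single graded piece, and finally shows that piece is nonzero for connected $N$ of rank $\ge 2$. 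This last point is the step I expect to be the main obstacle: the nonvanishing must be established intrinsically, without an ambient variety — presumably by producing a canonical nonzero class in the module, a combinatorial analogue of the fundamental class of a generic hyperplane section of $Y_N$ through the all-infinity point, or by a semismallness / relative Hard Lefschetz argument over the Chow ring. Even granting the realizable case entirely, this combinatorial nonvanishing remains the crux, precisely because the defining formula for $m_M$ exhibits no positivity at any individual flat.
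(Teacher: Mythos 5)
This statement is a \emph{conjecture} in the paper: no proof is given there, so your proposal is not competing with an existing argument but attempting to settle an open problem, and as written it does not. Your Step 1 is correct and coincides with what the paper actually establishes: multiplicativity $m_{M_1\oplus M_2}=m_{M_1}m_{M_2}$ is \cref{productive_m}, the invariance $m_M=m_{\mathrm{simp}(M)}$ follows since every ingredient of the formula depends only on $\mathcal{L}(M)$, and $m_{U_{1,1}}=0$ then reduces everything to strict positivity of $m_N$ for $N$ connected, simple, of rank at least $2$. Your remark that the ``if and only if'' clause should be read up to simplification (e.g.\ $U_{1,2}\oplus U_{2,3}$ has $m=0$ without literally having a Boolean summand) is a fair and useful correction to the statement, and your $Z$-polynomial rewriting of $m_M$ is a correct identity, though, as you say, it exhibits no positivity.

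The genuine gap is in Step 2, already in the realizable case. Identifying $m_N$ with the dimension of the normal Morse datum of $\IC_{Y_N}$ at the all-infinity point is correct and reproves $m_N\ge 0$, but you then assert that this Morse group vanishes ``exactly when'' $Y_N$ is locally an analytic product of a smooth factor with a smaller matroid Schubert variety. That equivalence is not a theorem and is false as a general principle: there exist singular varieties whose intersection complex has irreducible characteristic cycle (Schubert varieties in Grassmannians are the classical example), so vanishing of the multiplicity at the deepest stratum does not force any local product decomposition. The paper's final corollary, via \cref{non-positivity}, only gives the converse implication (irreducible characteristic cycle forces $M$ Boolean), and says nothing about which individual multiplicities are nonzero. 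Hard Lefschetz and Poincar\'e duality for $\mathrm{IH}^*$ tell you the Morse group lives in a single degree — that is just perversity again — but give no mechanism for showing it is nonzero when $N$ is connected of rank $\ge 2$. So the strict positivity that your Step 1 reduces the conjecture to remains unproved even for realizable matroids, and Step 3 (transporting a not-yet-existing argument into the Braden--Huh--Matherne--Proudfoot--Wang framework) is, as you yourself acknowledge, a program whose crux — producing a canonical nonzero class in a combinatorial Morse module — is precisely the content of the conjecture.
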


When $M$ is a direct sum of two matroids $M_1$ and $M_2$, we proved in \cref{productive_m} that $m_{M_1\oplus M_2}=m_{M_1}m_{M_2}$. Therefore, $m_M=0$ if $M$ is a direct sum of a Boolean matroid with an arbitrary matroid. This explains the second sentence in the above conjecture. 

To calculate the microlocal multiplicity, we use the fact that the constructible function of a constructible sheaf defined by taking the stalk-wise Euler characteristic number depends only on the characteristic cycle of the constructible sheaf. The stalk-wise Euler characteristic of the constructible sheaf is exactly given by the Kazhdan-Lusztig polynomial, evaluating at 1. The conormal varieties, on the other hand, correspond to Macpherson's local Euler obstruction functions. Our second result, which is also part of the proof of the first theorem, is the value of the local Euler obstruction of the matroid Schubert variety. Here we only state the value of the local Euler obstruction at the all-infinity point, but the value at other points can be reduced to this case. More precisely, $\Eu$ at a point in $Y_F$ is given by $\Eu_{M_F}$.

\begin{theorem}\label{Non_Recursive_Eu}
    For a rank $d$ loopless matroid $M=M(\mathcal{A})$ realized by a hyperplane arrangement $\mathcal{A}$, the value of the local Euler obstruction at the all-infinity point $\Eu_M$ is equal to $\chi_M(2)$.
\end{theorem}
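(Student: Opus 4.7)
The plan is to exploit the $\mathbb{C}^*$-action on $V$ by scalar multiplication, which extends to $(\mathbb{P}^1)^n$ and has $p = [\infty, \ldots, \infty]$ as an attracting fixed point. In the local chart $z_i = 1/x_i$ around $p$ this action rescales every coordinate uniformly, and the local defining equations of $Y_\mathcal{A}$, obtained by clearing denominators in the circuit relations of $V$, become homogeneous of positive degree. Hence the germ $(Y_\mathcal{A}, p)$ is an affine cone; concretely, in this chart it coincides with the reciprocal plane of the arrangement.

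For an affine cone the local Euler obstruction at the vertex equals the Euler characteristic of a generic affine hyperplane section: by the Brylinski-Dubson-Kashiwara local index theorem,
\[
\Eu_M = \chi\bigl(Y_\mathcal{A} \cap \{L = 1\}\bigr)
\]
for a sufficiently generic affine linear form $L$ in the local $z$-coordinates (conicality lets one remove the usual small-ball restriction by a rescaling argument). I would then compute this Euler characteristic by summing the contributions of the flat-indexed strata $\{Y_F^\circ\}_{F \in \mathcal{L}(M)}$: on each stratum the restriction of $L$ is either constant or nondegenerate, and a generic level set is an arrangement complement of a sub-matroid whose Euler characteristic is given by a characteristic polynomial. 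Möbius inversion on $\mathcal{L}(M)$ should then collapse the resulting expression to $\sum_{F \in \mathcal{L}(M)} \mu_M(\emptyset, F)\, 2^{d - \rk F} = \chi_M(2)$.

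The main obstacle is the stratum-wise bookkeeping and the identification of the factor $2^{d - \rk F}$ from the hyperplane-section geometry. A cleaner alternative, which bypasses this bookkeeping, is to prove the deletion-contraction recursion $\Eu_M = \Eu_{M \setminus e} - \Eu_{M / e}$ for every element $e$ that is neither loop nor coloop, together with multiplicativity $\Eu_{M_1 \oplus M_2} = \Eu_{M_1}\Eu_{M_2}$ and the base case $\Eu_{U_{1,1}} = 1$; since $\chi_M(2)$ obeys the same recursion, multiplicativity, and base case, the two invariants coincide. The geometric input for the recursion would come from analyzing the projection $\pi_e: Y_\mathcal{A} \to Y_{\mathcal{A} \setminus H_e}$ that forgets the $e$-th factor of $(\mathbb{P}^1)^n$, identifying its exceptional locus with $Y_{\mathcal{A}/H_e}$, and comparing local Euler obstructions at the corresponding all-infinity points of the three matroid Schubert varieties.
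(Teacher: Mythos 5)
Your first step is sound and matches the paper's starting point: the germ of $Y_\mathcal{A}$ at the all-infinity point is the affine reciprocal cone $X_\mathcal{A}$, and for a cone the Euler obstruction at the vertex is computed by a generic linear section (the paper uses the equivalent formulation via Aluffi's $\Eu_V(p)=\sum_j(-1)^jc_{Ma}(W)_j$). The gap is in the step you yourself flag as ``the main obstacle,'' and it is not mere bookkeeping: the generic linear form $L$ lives in the reciprocal coordinates $z_i=1/x_i$, so its level set meets each stratum $X_{\mathcal{A},F}\cong U_{\mathcal{A}^F}$ in a generic \emph{hyperplane section of the reciprocal embedding} of that arrangement complement --- not in an arrangement complement of a sub- or truncated matroid. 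Its Euler characteristic depends on the degree data of the projective reciprocal plane, not just on the lattice $\mathcal{L}(M)$ sitting inside $\mathbb{P}(V)$. Concretely, for $M=U_{2,3}$ realized by $x+y+z=0$, the reciprocal cone is the quadric $\{ab+bc+ca=0\}\subset\mathbb{C}^3$; the big stratum meets a generic affine plane $\{L=1\}$ in $\mathbb{P}^1$ minus $3+2=5$ points ($\chi=-3$, the extra $2$ being the degree of the conic), whereas any count based on M\"obius numbers of $M$ alone would give $-2$. The correct total is $0=\chi_{U_{2,3}}(2)$ only because of this degree correction. Identifying these section numbers is exactly where the paper's work lies: it computes $\deg\bigl(c_{SM}(\mathbb{P}(U_{\mathcal{A}^F}))\cap (p^*h)^j\bigr)$ on the wonderful compactification, where $p^*h$ is the beta class, obtaining sums of products of beta invariants over descending flags, and then needs a separate M\"obius-theoretic identity to show these equal $-2^{\rk F}\chi_{M^F}(1/2)$. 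Your proposed M\"obius inversion would not produce the factors $2^{d-\rk F}$ without this input.

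Your alternative route is also unsubstantiated at its key step. Deletion--contraction $\Eu_M=\Eu_{M\setminus e}-\Eu_{M/e}$ does hold a posteriori (since $\chi_M(2)$ satisfies it), but the local Euler obstruction is not a birational invariant and $\Eu$-functions do not push forward under proper maps (only $c_*$ of constructible functions does), so the birational projection $\pi_e:Y_\mathcal{A}\to Y_{\mathcal{A}\setminus H_e}$ gives no direct comparison of Euler obstructions at the two vertices; you would need to control the Nash blowups or Chern--Mather classes through $\pi_e$, which is not easier than the original problem. The multiplicativity and base case parts of that route are fine, and the uniqueness logic is fine, but the recursion itself is the entire difficulty.
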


Combining with the results on CSM classes of matroid Schubert varieties \cite[Theorem 1.15]{Eur_Huh_Larson_2023}, we can write down the Chern-Mather class of the matroid Schubert variety.

\begin{corollary}\label{Closed_CM}
    For a rank $d$ loopless matroid $M=M(\mathcal{A})$ realized by a hyperplane arrangement $\mathcal{A}$, the Chern-Mather class of $Y_\mathcal{A}$ is given by the following formula:
    \[
        c_{Ma}(Y_\mathcal{A})=\sum_{F\in\mathcal{L}(M)}2^{\rk M-\rk F}y_F\in A_*(Y_\mathcal{A}).
    \]
\end{corollary}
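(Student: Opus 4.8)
The plan is to evaluate $c_{Ma}(Y_{\mathcal A})=c_*(\Eu_{Y_{\mathcal A}})$, where $c_*$ denotes MacPherson's natural transformation from constructible functions to Chow groups, by combining \cref{Non_Recursive_Eu} with the Chern--Schwartz--MacPherson class $c_{SM}(Y_{\mathcal A})=c_*(\mathbbm 1_{Y_{\mathcal A}})$ of a matroid Schubert variety computed in \cite[Theorem 1.15]{Eur_Huh_Larson_2023}. The only facts about $c_*$ that enter are these two identities together with its functoriality for proper morphisms.

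First I would record $\Eu_{Y_{\mathcal A}}$ explicitly as a constructible function. By \cref{Non_Recursive_Eu} applied to the contraction $M_F$ (which is loopless since $F$ is a flat), together with the observation stated just before \cref{Non_Recursive_Eu} that $\Eu$ takes the value $\Eu_{M_F}=\chi_{M_F}(2)$ at every point of the stratum $Y_F$, one obtains
\[
\Eu_{Y_{\mathcal A}}=\sum_{F\in\mathcal L(M)}\chi_{M_F}(2)\,\mathbbm 1_{Y_F},
\]
where $\mathbbm 1_{Y_F}$ is the indicator function of the locally closed stratum. Next I would change basis to the indicators of the closed strata. Each closure $\overline{Y_F}$ is the matroid Schubert variety of the localization $M^F$, with strata the $Y_G$ for flats $G\le F$; hence $\mathbbm 1_{\overline{Y_F}}=\sum_{G\le F}\mathbbm 1_{Y_G}$, and Möbius inversion in $\mathcal L(M)$ gives $\mathbbm 1_{Y_F}=\sum_{G\le F}\mu(G,F)\,\mathbbm 1_{\overline{Y_G}}$. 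Applying $c_*$, functorial for the closed inclusions $\overline{Y_G}\hookrightarrow Y_{\mathcal A}$, and writing $c_{SM}(Y_{M^G})=\sum_{H\le G}s(M^G,H)\,y_H$ with the coefficients $s(M^G,H)$ taken from \cite[Theorem 1.15]{Eur_Huh_Larson_2023}, the coefficient of $y_H$ in $c_{Ma}(Y_{\mathcal A})$ comes out to be
\[
\sum_{F\ge H}\chi_{M_F}(2)\sum_{H\le G\le F}\mu(G,F)\,s(M^G,H).
\]

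The final step, and the combinatorial heart of the proof, is to show this equals $2^{\rk M-\rk H}$, which is the coefficient asserted in \cref{Closed_CM} (after renaming $H$ to $F$). On the one hand, $\chi_{M_F}(2)=\sum_{G'\ge F}\mu(F,G')\,2^{\rk M-\rk G'}$ is the standard flat expansion of the characteristic polynomial of the contraction. On the other hand, the coefficient $s(M^G,H)$ of \cite[Theorem 1.15]{Eur_Huh_Larson_2023} is the number of flats of $M$ in the interval $[H,G]$ of $\mathcal L(M)$, i.e.\ the value $(\zeta\ast\zeta)(H,G)$ in the incidence algebra. Substituting both, carrying out the sum over $G$ and then over $F$, and collecting the coefficient of $2^{\rk M-\rk G'}$, the scalar that remains is $(\zeta\ast\zeta\ast\mu\ast\mu)(H,G')$; since $\zeta\ast\mu=\delta$, associativity gives $\zeta^2\ast\mu^2=\delta$, so only the term $G'=H$ survives and the sum collapses to $2^{\rk M-\rk H}$. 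I expect the genuine obstacle to lie not in this identity but around it: pinning down $\overline{Y_F}=Y_{M^F}$ inside the stratification of \cite[Proof of Theorem 14]{10.4310/ACTA.2017.v218.n2.a2}, checking that the classes $y_H$ attached to the various localizations sit compatibly inside $A_*(Y_{\mathcal A})$, and confirming that the coefficients of \cite[Theorem 1.15]{Eur_Huh_Larson_2023} are in the form used above. Two cases serve as checks throughout: the Boolean matroid, where $Y_{\mathcal A}=(\mathbb P^1)^d$ is smooth and $c_{Ma}(Y_{\mathcal A})=c_{SM}(Y_{\mathcal A})=\prod_{i=1}^d(1+2h_i)=\sum_{F}2^{\,d-\rk F}\,y_F$, and $M=U_{2,3}$, where $Y_{\mathcal A}$ has a single $A_1$ singular point — at which $\Eu=0=\chi_{U_{2,3}}(2)$ — and is smooth elsewhere; in both cases the formula can be verified by hand.
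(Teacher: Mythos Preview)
Your strategy---write $\Eu_{Y_{\mathcal A}}=\sum_F\chi_{M_F}(2)\,\mathbbm 1_{Y_F}$ via \cref{Non_Recursive_Eu} and then push forward through $c_*$ using \cite[Theorem~1.15]{Eur_Huh_Larson_2023}---is exactly what the paper does. The difference is that you insert a M\"obius inversion to pass from open to closed strata and then posit that the EHL coefficients are $s(M^G,H)=(\zeta*\zeta)(H,G)$, the number of flats in $[H,G]$. The paper quotes EHL in the much simpler form $c_{SM}(Y_{\mathcal A})=\sum_{G}y_G$, i.e.\ all coefficients equal to~$1$, and applies it directly to each sub\-variety $Y_G=Y_{\mathcal A^G}$ to obtain $c_*(\mathbbm 1_{Y_G})=\sum_{F\le G}y_F$. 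After swapping the two sums, the only combinatorics needed is the one--line identity
\[
\sum_{G\ge F}\chi_{M_G}(2)=2^{\rk M-\rk F},
\]
which is the specialization at $t=2$ of $\sum_{G\ge F}\chi_{M_G}(t)=t^{\rk M-\rk F}$ (immediate from $\zeta*\mu=\delta$). Your $\zeta^{2}*\mu^{2}=\delta$ manoeuvre is thus a self--inflicted complication: the extra $\zeta$ you introduce via M\"obius inversion you then cancel with the extra $\zeta$ you (incorrectly) load into the EHL coefficients, and the two errors happen to compensate. With the EHL coefficients as actually stated in the paper (all equal to $1$), your detour through closures would collapse the coefficient of $y_H$ to $\chi_{M_H}(2)$ rather than $2^{\rk M-\rk H}$, which is not the claimed formula. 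So the proof as written has a genuine gap at precisely the point you flagged---the form of the EHL coefficients---and the fix is to drop the M\"obius inversion altogether and argue as the paper does.
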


A lot of work is done concerning the positivity of the local Euler obstruction in the settings of the ordinary Schubert variety. For example, Mihalcea and Singh proved if $G/P$ is cominuscule then any Schubert subvariety of $G/P$ has nonnegative local Euler obstruction function, see \cite{mihalcea2020mather}. We also examined such positivity here, and to our surprise, the Euler obstruction function of $Y_\mathcal{A}$ is always not everywhere positive unless $M$ is a boolean matroid.

\begin{theorem}\label{non-positivity}
    For a simple rank $d$ matroid $M=M(\mathcal{A})$ realized by a hyperplane arrangement $\mathcal{A}$, the local Euler obstruction function of $Y_\mathcal{A}$ is everywhere strictly positive if and only if $M$ is a Boolean matroid. 
\end{theorem}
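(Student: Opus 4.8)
The plan is to reduce the statement to a combinatorial fact about the lattice of flats $\mathcal{L}(M)$ and then verify that fact by an explicit construction. By \cref{Non_Recursive_Eu} and the discussion preceding it, the local Euler obstruction function of $Y_\mathcal{A}$ takes the value $\Eu_{M_F}=\chi_{M_F}(2)$ on the stratum $Y_F$, for every flat $F\in\mathcal{L}(M)$; here $M_F$ is the contraction of $M$ at $F$, which is loopless and realizable over $\mathbb{C}$, so \cref{Non_Recursive_Eu} applies to $M_F$ (and $\chi_{M_F}$ depends only on the interval $[F,\hat{1}]\subset\mathcal{L}(M)$, so any parallel elements created by the contraction are harmless). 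Hence the Euler obstruction function is everywhere strictly positive if and only if $\chi_{M_F}(2)>0$ for every flat $F$. If $M$ is Boolean, then every $M_F$ is Boolean of rank $\rk M-\rk F$, so $\chi_{M_F}(2)=(2-1)^{\rk M-\rk F}=1>0$; equivalently $Y_\mathcal{A}=(\mathbb{P}^1)^{d}$ is smooth and $\Eu\equiv 1$. This settles the ``if'' direction.

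For the converse, the goal is to prove: \emph{if $M$ is simple and not Boolean, then $\mathcal{L}(M)$ contains a flat $F$ of corank $2$ that lies below at least three flats of corank $1$.} Granting this, $[F,\hat{1}]$ is the rank-$2$ geometric lattice with $k\geq 3$ atoms, so $\chi_{M_F}(q)=\chi_{U_{2,k}}(q)=(q-1)(q-k+1)$ and $\chi_{M_F}(2)=3-k\leq 0$; then $\Eu$ is non-positive on $Y_F$, hence not everywhere strictly positive. Corank $2$ is the natural place to look: contracting at a corank-$1$ flat always gives characteristic polynomial $q-1$, with value $1$ at $q=2$, while contractions at flats of corank $\geq 3$ are not classified cleanly enough to control directly.

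To construct $F$, I would use that a simple matroid is Boolean precisely when it has no circuits, and that a circuit of a simple matroid has size at least $3$. Fix a circuit $C$ with $|C|\geq 3$, choose three distinct elements $a,b,c\in C$, set $D=C\setminus\{a,b,c\}$, extend the independent set $C\setminus\{a\}$ to a basis $B=(C\setminus\{a\})\sqcup T$ of $M$, and let $F=\overline{D\cup T}$; then $F$ has rank $|D\cup T|=\rk M-2$. Two things need checking. (i) $a,b,c\notin F$: for $b$ and $c$ this is because $D\cup T\cup\{b\}$ and $D\cup T\cup\{c\}$ are proper subsets of the basis $B$, hence independent of rank $\rk M-1$; for $a$ one uses the circuit identity $\overline{C}=\overline{C\setminus\{a\}}$ and submodularity to get $F\wedge\overline{C}=\overline{D}$, together with $a\notin\overline{D}$ (since $D\cup\{a\}=C\setminus\{b,c\}$ is a proper subset of $C$). (ii) The rank-$(\rk M-1)$ flats $\overline{F\cup\{a\}}$, $\overline{F\cup\{b\}}$, $\overline{F\cup\{c\}}$ are pairwise distinct: for $x\neq y$ in $\{a,b,c\}$ with third element $z$, the identity $\overline{C}=\overline{C\setminus\{z\}}$ yields $\overline{\{x,y\}}\vee F=\hat{1}$, so if $\overline{F\cup\{x\}}=\overline{F\cup\{y\}}$ then that flat would contain $\hat{1}$, contradicting its rank. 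By (i) and (ii) these are three distinct atoms of $[F,\hat{1}]$, which proves the lemma.

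I expect the existence of $F$ in the non-Boolean case to be the main obstacle, and the subtlety is that a simple non-Boolean matroid need not have a circuit of size exactly $3$ --- for instance $U_{3,4}$ has a unique circuit, of size $4$ --- so one cannot simply take $F$ to be a flat complementary to a line containing three points. The construction above circumvents this by discarding all but three elements of a possibly long circuit and completing to a basis, with $T$ playing the role of a modular complement to $\overline{C}$ inside the relevant contraction. The asymmetric role of $a$, which (unlike $b$ and $c$) lies in $\overline{C\setminus\{a\}}$ and so is not covered by the ``subset of a basis'' argument, is the point where the bookkeeping in (i) and (ii) must be done carefully.
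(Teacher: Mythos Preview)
Your proof is correct and takes a genuinely different, more direct route than the paper's.

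The paper argues by induction on the rank $d$. Assuming $\chi_{M_F}(2)>0$ for every flat, it applies the inductive hypothesis to each single-element contraction $M_{\{i\}}$, deducing that the rank-two flats through any fixed element $0$ partition $E\setminus\{0\}$ into exactly $d-1$ blocks. A careful incidence argument with these partitions (choosing representatives, comparing two such partitions, and intersecting the resulting rank-two flats) then forces $|E|=d$, hence $M$ Boolean. The induction is not deep, but the bookkeeping with the partitions is somewhat delicate.

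Your approach bypasses the induction entirely: you locate in one step a single corank-$2$ flat $F$ whose upper interval $[F,\hat 1]$ has at least three atoms, which immediately gives $\chi_{M_F}(2)\le 0$. The construction---take a circuit $C$, drop three elements $a,b,c$, complete $C\setminus\{a\}$ to a basis by adjoining $T$, and set $F=\overline{(C\setminus\{a,b,c\})\cup T}$---is elegant, and your verification of (i) and (ii) is sound. The submodularity step $\rk(F\vee\overline C)+\rk(F\wedge\overline C)\le\rk F+\rk\overline C$ with $F\vee\overline C=\hat 1$ pins down $F\wedge\overline C=\overline D$, and the circuit identity $\overline{C\setminus\{z\}}=\overline C$ does exactly the work you claim in (ii).

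What each buys: the paper's induction extracts slightly more structure along the way (it effectively shows every contraction is Boolean up to simplification), whereas your argument is shorter, self-contained, and makes transparent that the obstruction lives in corank two. Your remark that one cannot assume $|C|=3$ (witness $U_{3,4}$) is exactly the subtlety, and your use of $T$ as a complement handles it cleanly.
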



The paper is organized as follows. Section 2 is devoted to some background material like the Euler obstruction functions and matroid Schubert varieties. We will prove a recursive formula for the Euler obstruction number in section 3. In section 4, we will prove \cref{Non_Recursive_Eu} and \cref{non-positivity}. Finally, in section 5, we will discuss the microlocal multiplicity.

\subsubsection*{Acknowledgements.} The author would like to thank his advisor Botong Wang for various helpful discussions and suggestions, especially the usage of the CSM classes of matroids. He also would like to thank Colin Crowley for his suggestions on the first draft and an improvement on the statement of the conjecture.

\section{Background}

\subsection{The geometry of matroid Schubert varieties}
The geometry of a matroid Schubert variety is very close to an affine toric variety. We will briefly summarize the facts about the matroid Schubert varieties and fix notations in the following paragraphs.

Let $V\subset \mathbb{C}^{n}$ be a linear subspace of dimension $d$. Let $M$ be the loopless matroid on $E=\{0,1,\cdots,n-1\}$ of
$V$. Recall a subset $B\subset E$ is a base of $M$ if and only if $V\to\mathbb{C}^E\twoheadrightarrow \mathbb{C}^B$ is an isomorphism.

 The linear subspace $V$ as an additive group acts on itself and $\mathbb{C}^{n}$ by addition, so this action naturally induces an action on $Y_\mathcal{A}$. Like an affine toric variety, $Y_\mathcal{A}$ has only finitely many $V$-orbits, and these orbits are one-to-one corresponding to the \emph{flats} of $M$. More precisely, for a flat $F$ of $M$, write $\pi_F:\mathbb{C}^{n}\to \mathbb{C}^F$, then the orbit corresponding to $F$ is $\pi_F(V)\times \{\infty\}^{E\backslash F}$. The orbits give a Whitney stratification of $Y_\mathcal{A}$. For more information about matroid Schubert varieties, the interested readers are directed to \cite{crowley2023hyperplane}.

The action of $V$ on $Y_\mathcal{A}$ has a unique fixed point $\{\infty\}^{n}$. It is the orbit corresponding to the empty set, which is a flat. The intersection of $Y_\mathcal{A}$ and the affine chart $(\mathbb{P}^1\setminus \{0\})^{n}$ is called the affine reciprocal plane of the matroid $M$, denoted by $X_\mathcal{A}$. The stratification of $Y_\mathcal{A}$ induces a stratification of $X_\mathcal{A}$, so the strata of $X_\mathcal{A}$ are also one-to-one corresponding to flats. We will use $X_{\mathcal{A},F}$ to denote the stratum that corresponds to the flat $F$. In particular, the big open stratum of $X_\mathcal{A}$ is $V\cap (\mathbb{C}^*)^{n}$ which we will denote by $U_\mathcal{A}$. This is exactly the complement of the hyperplane arrangement $\mathcal{A}$. The following statement is proved in \cite{ELIAS201636}.

\begin{proposition}[{\cite[Proposition 3.1]{ELIAS201636}}]
    The stratum $X_{\mathcal{A},F}$ is isomorphic to $U_{\mathcal{A}^F}$, and its closure in $X_\mathcal{A}$ is isomorphic to $X_{\mathcal{A}^F}$.
\end{proposition}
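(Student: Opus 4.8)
The plan is to unwind all the definitions using the explicit orbit description of $Y_\mathcal{A}$ recalled above, reducing the statement to the elementary fact that Zariski closure is compatible with the product decomposition $(\mathbb{P}^1)^n=(\mathbb{P}^1)^F\times(\mathbb{P}^1)^{E\setminus F}$. The first thing to pin down is which realization the symbol $\mathcal{A}^F$ refers to. Let $\pi_F\colon\mathbb{C}^n\to\mathbb{C}^F$ be the coordinate projection. For any $S\subseteq F$ the composite $V\hookrightarrow\mathbb{C}^n\xrightarrow{\pi_F}\mathbb{C}^F\twoheadrightarrow\mathbb{C}^S$ agrees with $V\to\mathbb{C}^S$, so $\pi_F(V)$ and $V$ have the same image in $\mathbb{C}^S$; hence $S$ is a basis of the restriction $M|_F$ iff $\pi_F(V)\to\mathbb{C}^S$ is an isomorphism, i.e.\ the matroid of the subspace $\pi_F(V)\subseteq\mathbb{C}^F$ is exactly the localization $M^F=M|_F$. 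Since $M$ is loopless, no $i\in F$ is a loop of $M^F$, equivalently $\pi_F(V)\not\subseteq\{x_i=0\}$, so $\pi_F(V)$ is a legitimate realization: $Y_{\mathcal{A}^F}=\overline{\pi_F(V)}\subseteq(\mathbb{P}^1)^F$, $X_{\mathcal{A}^F}=Y_{\mathcal{A}^F}\cap(\mathbb{P}^1\setminus\{0\})^F$, and $U_{\mathcal{A}^F}=\pi_F(V)\cap(\mathbb{C}^*)^F$ are all defined, and $U_{\mathcal{A}^F}$ is a nonempty Zariski-open, hence dense, subset of the irreducible variety $\pi_F(V)$.

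For the first isomorphism, observe that by construction $X_{\mathcal{A},F}$ is the intersection of the $V$-orbit $O_F=\pi_F(V)\times\{\infty\}^{E\setminus F}$ with $X_\mathcal{A}=Y_\mathcal{A}\cap(\mathbb{P}^1\setminus\{0\})^n$, and since $O_F\subseteq Y_\mathcal{A}$ this is simply $O_F\cap(\mathbb{P}^1\setminus\{0\})^n$. Imposing that every coordinate be nonzero is automatic on the factors indexed by $E\setminus F$, whose value is $\infty$, and cuts $\pi_F(V)$ down to $\pi_F(V)\cap(\mathbb{C}^*)^F=U_{\mathcal{A}^F}$ on the remaining factors; thus $X_{\mathcal{A},F}=U_{\mathcal{A}^F}\times\{\infty\}^{E\setminus F}$, and projection onto the $F$-coordinates is an isomorphism onto $U_{\mathcal{A}^F}$.

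For the closure statement, note first that $X_\mathcal{A}$ is closed in $(\mathbb{P}^1\setminus\{0\})^n$, being $Y_\mathcal{A}$ (closed in $(\mathbb{P}^1)^n$) intersected with an open chart, so the closure of $X_{\mathcal{A},F}$ inside $X_\mathcal{A}$ equals its closure in $(\mathbb{P}^1\setminus\{0\})^n$, and taking closures commutes with the decomposition $(\mathbb{P}^1\setminus\{0\})^n=(\mathbb{P}^1\setminus\{0\})^F\times(\mathbb{P}^1\setminus\{0\})^{E\setminus F}$. On the second factor the closure of $\{\infty\}^{E\setminus F}$ is itself; on the first factor, since $U_{\mathcal{A}^F}$ is dense in $\pi_F(V)$ and $\pi_F(V)$ is dense in $Y_{\mathcal{A}^F}$, the closure of $U_{\mathcal{A}^F}$ in $(\mathbb{P}^1)^F$ is $Y_{\mathcal{A}^F}$, and intersecting with the open set $(\mathbb{P}^1\setminus\{0\})^F$ yields $X_{\mathcal{A}^F}$. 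Hence the closure of $X_{\mathcal{A},F}$ in $X_\mathcal{A}$ is $X_{\mathcal{A}^F}\times\{\infty\}^{E\setminus F}\cong X_{\mathcal{A}^F}$. The whole argument is essentially bookkeeping; the only step needing care is the matroid-theoretic identification of $\pi_F(V)$ with a realization of $M^F$ (in particular keeping the localization-versus-contraction conventions straight) together with checking it remains loopless, plus the elementary compatibility of Zariski closure with the product structure of $(\mathbb{P}^1)^n$. No input deeper than the orbit description of $Y_\mathcal{A}$ is required.
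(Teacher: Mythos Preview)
Your argument is correct, but there is nothing in the present paper to compare it against: this proposition is quoted without proof from \cite[Proposition~3.1]{ELIAS201636} as background, so the paper supplies no argument of its own. Your write-up is in any case the intended straightforward unpacking of the orbit description $O_F=\pi_F(V)\times\{\infty\}^{E\setminus F}$; the only places requiring care---identifying $\pi_F(V)\subseteq\mathbb{C}^F$ as a loopless realization of $M^F$, and the compatibility of Zariski closure with the product decomposition (trivial here since one factor is a single point)---are handled correctly.
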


Here, the notation $\mathcal{A}^F$ and $\mathcal{A}_F$ refers to the localization and the contraction of the hyperplane $\mathcal{A}$. On the level of matroid, we use $M^F$ and $M_F$ to denote the localization and the contraction of $M$ at the flat $F$. See \cite{braden2023singularhodgetheorycombinatorial} for the precise definitions.

The local geometry of the affine reciprocal plane is described by the following theorem which roughly says that $X_{\mathcal{A}_F}$ is an ``\'etale slice'' to the stratum $X_{\mathcal{A},F}$.

\begin{theorem}[{\cite[Theorem 3.3]{ELIAS201636}}] \label{inductive_slice}
    Let $F$ be a flat of $M$ and let $x\in X_{\mathcal{A},F}$. There exists an open subvariety $W$ containing $x$ and a map $\Phi:W\to X_{\mathcal{A}_F}\times X_{\mathcal{A},F}$ such that $\Phi(x)=(0,x)$ and $\Phi$ is \'etale at x. Here $0$ is the original point in the affine chart $(\mathbb{C}^*\cup \infty)^{n}$.
\end{theorem}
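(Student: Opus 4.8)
The statement is an étale-local product (normal slice) decomposition of the affine reciprocal plane along the stratum $X_{\mathcal{A},F}$, so the plan is to write down the map $\Phi$ explicitly, produce an explicit two-sided inverse on a neighbourhood, and thereby see that $\Phi$ is actually a local isomorphism at $x$. Exhibiting a genuine local inverse is worth the effort because it side-steps checking étaleness through differentials, which would be awkward: $X_{\mathcal{A}_F}$ may well be singular at $0$.

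First I would pass to the reciprocal coordinates $u_i=1/x_i$, so that $X_{\mathcal{A}}\subset\mathbb{A}^E$ is the Zariski closure of the image of $\phi\colon U_{\mathcal{A}}\to\mathbb{A}^E$, $v\mapsto(1/\ell_i(v))_{i\in E}$, where $\ell_i:=x_i|_V$ and $U_{\mathcal{A}}=V\setminus\bigcup\mathcal{A}$. Set $V_F:=\bigcap_{i\in F}\ker\ell_i$. Because $F$ is a flat one has $\ell_i|_{V_F}=0$ exactly for $i\in F$, so $\{\ker(\ell_i|_{V_F})\}_{i\notin F}$ is a genuine arrangement in $V_F$; since $\rk M_F=d-\rk F=\dim V_F$ (here $M_F$ is loopless, as $F$ is a flat), this arrangement realizes $M_F$, and dually the forms $\{\ell_i\}_{i\in F}$ descend to $V/V_F$ and realize $M^F=M|_F$. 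Choosing a linear splitting $V=V_F\oplus V'$ with $V'\xrightarrow{\sim}V/V_F$ and writing $\ell_i=\ell_i'\oplus\ell_i''$ accordingly, I get $\ell_i'=0$ for $i\in F$, the forms $\{\ell_i'\}_{i\notin F}$ realizing $\mathcal{A}_F$ on $V_F$, and the forms $\{\ell_i''\}_{i\in F}$ realizing $\mathcal{A}^F$ on $V'$; in particular $\{\ell_j''\}_{j\in F}$ spans $(V')^{*}$.

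Next I would build $\Phi$. The given point $x$ has $u_i=0$ for $i\notin F$ and $(u_j)_{j\in F}\in X_{\mathcal{A},F}$. For a tuple $(u_j)_{j\in F}$ with all $u_j\neq0$ let $v'(u)\in V'$ be determined by $\ell_j''(v'(u))=1/u_j$ ($j\in F$); this is well defined and regular since the $F$-coordinates of points of $X_{\mathcal{A}}$ already satisfy the equations of $X_{\mathcal{A}^F}$. Let $W\subset X_{\mathcal{A}}$ be the open set where $(u_j)_{j\in F}\in X_{\mathcal{A},F}$ (i.e. the $F$-coordinates avoid the smaller strata) and $1-u_i\ell_i''(v'(u))\neq0$ for all $i\notin F$; it contains $x$, and I set
\[
\Phi\colon W\longrightarrow \mathbb{A}^{E\setminus F}\times\mathbb{A}^{F},\qquad
(u_i)_{i\in E}\longmapsto\Bigl(\bigl(\tfrac{u_i}{1-u_i\ell_i''(v'(u))}\bigr)_{i\notin F},\ (u_j)_{j\in F}\Bigr),
\]
so $\Phi(x)=(0,x)$ on the nose. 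Evaluating on the dense subset $W\cap\phi(U_{\mathcal{A}})$: a point $\phi(v)$ with $v=v_F+v'\in V_F\oplus V'$ satisfies $1/u_i=\ell_i'(v_F)+\ell_i''(v')$ for $i\notin F$, so its $\Phi$-image is $\bigl((1/\ell_i'(v_F))_{i\notin F},(1/\ell_j''(v'))_{j\in F}\bigr)=\bigl(\phi_{\mathcal{A}_F}(v_F),\phi_{\mathcal{A}^F}(v')\bigr)$; since the second component stays in $X_{\mathcal{A},F}$ by construction, continuity forces $\Phi(W)\subseteq X_{\mathcal{A}_F}\times X_{\mathcal{A},F}$. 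For the inverse I would try
\[
\Psi\colon \bigl((s_i)_{i\notin F},(t_j)_{j\in F}\bigr)\longmapsto\Bigl(\bigl(\tfrac{s_i}{1+s_i\ell_i''(v'(t))}\bigr)_{i\notin F},\ (t_j)_{j\in F}\Bigr),
\]
which is a morphism near $(0,x)$ (denominators near $1$) and lands in $X_{\mathcal{A}}$ because on the dense locus, with $(s_\bullet)=\phi_{\mathcal{A}_F}(v_F)$, its value equals $\phi(v_F+v')$. The identities $1-u_i\ell_i''(v')=(1+s_i\ell_i''(v'))^{-1}$ and its converse then give $\Psi\circ\Phi=\mathrm{id}$ near $x$ and $\Phi\circ\Psi=\mathrm{id}$ near $(0,x)$; hence $\Phi$ is a local isomorphism at $x$, a fortiori étale there, and after shrinking $W$ it is étale onto an open subset of $X_{\mathcal{A}_F}\times X_{\mathcal{A},F}$.

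I expect the main obstacle to be the bookkeeping around the splitting $V=V_F\oplus V'$: checking that $\{\ell_i'\}_{i\notin F}$ and $\{\ell_i''\}_{i\in F}$ really realize the contraction $\mathcal{A}_F$ and the localization $\mathcal{A}^F$ — which is exactly where the flat hypothesis on $F$ and looplessness enter — and then keeping straight that the reciprocal coordinates indexed by $E\setminus F$, together with the $X_{\mathcal{A}_F}$-factor, are the ones near $0$ while those indexed by $F$ stay bounded away from $0$, since that is what confines the denominators and the reconstructed point $v_F+v'$ to the admissible locus. A more algebraic variant would instead run the same substitution through the presentation of the ideal of $X_{\mathcal{A}}$ by circuit relations, showing that circuits meeting $E\setminus F$ cut out $X_{\mathcal{A}_F}$ and circuits contained in $F$ cut out $X_{\mathcal{A}^F}$, but the parametrization argument above looks shorter.
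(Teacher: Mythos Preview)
The paper does not prove this statement; it is quoted as background from \cite[Theorem~3.3]{ELIAS201636} with no argument given, so there is no proof in the present paper to compare your proposal against.

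That said, your argument is a correct, self-contained proof of the slice decomposition. Two small points are worth tightening. First, when you define $v'(u)$ by $\ell_j''(v'(u))=1/u_j$ for all $j\in F$, this system is overdetermined whenever $|F|>\rk F$; to exhibit $v'$ as a regular map you should fix a basis $B\subset F$ of the flat (so $|B|=\rk F$ and $\{\ell_j''\}_{j\in B}$ is a basis of $(V')^*$), solve only those equations, and then note the remaining ones hold automatically because $(u_j)_{j\in F}\in X_{\mathcal{A},F}=U_{\mathcal{A}^F}$. Second, it is worth saying explicitly that the identity $1+s_i\,\ell_i''(v')=(1-u_i\,\ell_i''(v'))^{-1}$ is a formal algebraic identity valid on all of $W$, not merely on the dense image of $U_{\mathcal{A}}$; this is what makes $\Psi\circ\Phi=\mathrm{id}$ and $\Phi\circ\Psi=\mathrm{id}$ hold as morphisms, so that $\Phi$ is a genuine local isomorphism (hence \'etale) at $x$, rather than only on a dense set. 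With those clarifications your construction goes through.
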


\subsection{Constructible sheaves, Constructible functions, Characteristic cycles and Local Euler obstructions}

Let $X$ be a smooth projective variety. After choosing a stratification on $X$, we can define the bounded derived category of $\mathbb{C}$-constructible sheaves $D^b_c(X)$. Given a constructible complex $\mathcal{F}^\bullet$, we get an associated constructible function by taking stalk-wise Euler characteristic:
\[
    p \longrightarrow \chi(\mathcal{F}^\bullet_p).
\]

If we denote $F(X)$ the group of all constructible functions on $X$, one can verify we actually get a group homomorphism $K_0(D^b_c(X))\to F(X)$, where $K_0$ denotes the Grothendieck group.

The characteristic cycle is another thing one can associate with a constructible sheaf. It defines a group homomorphism
\[
    K_0(D^b_c(X))\to L(X),
\]
where $L(X)$ denotes the group of all conic Lagrangian cycles in the cotangent bundle $T^*X$. Any irreducible conic Lagrangian subvariety on the cotangent bundle is a conormal variety to an irreducible subvariety $Z\subset X$, which is denoted by $T^*_Z X$. Recall that the conormal variety to $Z$ is defined to be the closure of $T^*_{Z_{reg}} X$, where $Z_{reg}$ is the smooth part of $Z$. As a conclusion, $L(X)$ is generated by $T^*_Z X$.

It turns out that $L(X)$ and $F(X)$ are naturally isomorphic to each other. The isomorphism is defined via the construction of the Euler obstruction function, see \cite{MacPherson1974}. For each $T^*_Z X$, we map it to $(-1)^{\dim Z}\Eu_Z$, where $\Eu_Z$ is the local Euler obstruction function of $Z$. Moreover, this isomorphism fits into the group homomorphism from $K_0(D^b_c(X))$. More precisely, the following diagram commutes:
\[
\begin{tikzcd}
                                & K_0(D^b_c(X)) \arrow[rd,"\chi"] \arrow[ld, "\mathrm{CC}"'] &      \\
L(X) \arrow[rr, "\Eu", "\cong"'] &                                     & F(X)
\end{tikzcd}
\]

In particular, it makes sense to speak about the characteristic cycle of a constructible function, which is just the inverse isomorphism of $\Eu$.

\subsection{Chern classes}
Macpherson proved in his remarkable paper \cite{MacPherson1974} that there is a natural transformation $c_*$ from the functor $F$, which assigns an $X$ with $F(X)$, to the functor $H_*$, which assigns an $X$ with its homology group $H_*(X)$, commuting with the proper push forward. Later, $H_*$ is improved to $A_*$, the Chow group functor. In other words, given a smooth projective variety $X$, there is a group homomorphism $c_*$ which assigns a constructible function with an element in $A_*(X)$. There are two important cases. For a subvariety $Z$, the indicator function $1_Z$ is mapped to the so-called Chern-Macpherson-Schwartz class $c_{SM}(Z)$, or CSM class for short; the local Euler obstruction function $\Eu_Z$ is mapped to the so-called Chern-Mather class $c_{Ma}(Z)$. When $Z$ is smooth they all coincide with $c(Z)\cap [X]$, where $c(Z)$ is the ordinary Chern class of $Z$.

Let us end the subsection with Aluffi's result which shows that the Chern-Mather class can be used to calculate the local Euler obstruction.

\begin{proposition}[{\cite[Proposition 3.17]{MR3739192}}]\label{Aluffi_Theorem}
    Let $V$ be a cone over a subvariety $W \subset \mathbb{P}^m$ with vertex $\Lambda$. Let $p \in \Lambda$. Then the local Euler obstruction $\Eu_V (p)$ equals
    \begin{equation*}
        \Eu_V(p)=\sum_{j=0}^{\dim W}(-1)^j c_{Ma}(W)_j
    \end{equation*}
where $c_{Ma}(W)_j$ denotes the degree of the $j$-dimensional component of $c_{Ma}(W)_j$.
\end{proposition}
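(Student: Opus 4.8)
The plan is to compute $\Eu_V(p)$ directly from MacPherson's original Nash-blowup definition, using the cone structure to push everything onto $W$. Since the local Euler obstruction depends only on an analytic neighbourhood of $p$, I may assume $\Lambda=\{p\}$ is a point and that $V\subset\mathbb{C}^{m+1}$ is the affine cone over $W\subset\mathbb{P}^m$ with vertex the origin $p=0$.

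The first step is to describe the Nash blowup $\mu\colon\tilde V\to V$ together with its Nash bundle $\tilde T$. The key observation is that since $V$ is $\mathbb{C}^\ast$-invariant, the dilations $v\mapsto\lambda v$ are automorphisms of $V$ acting linearly on $\mathbb{C}^{m+1}$; hence the tangent plane $T_vV\subset\mathbb{C}^{m+1}$ at a smooth point depends only on the Nash datum of $W$ at $[v]\in W$, and in fact $T_vV$ is the affine cone over the embedded projective tangent space $\hat T_{[v]}W\subset\mathbb{P}^m$. Consequently the Gauss map of $V$ factors through the Nash blowup $\mu_W\colon\tilde W\to W$ of $W$, and I expect $\tilde V$ to be identified with the total space of the line bundle $\mathcal L:=\mu_W^\ast\mathcal O_W(-1)$ over $\tilde W$, with $\mu$ an isomorphism away from the vertex and contracting the zero section $\tilde W=\mu^{-1}(p)$. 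Under this identification the Euler sequence on $\mathbb{P}^m$ produces a short exact sequence $0\to\mathcal L\to\tilde T\to\mu_W^\ast\bigl(\tilde T^W\otimes\mathcal O_W(-1)\bigr)\to 0$, the sub-line-bundle $\mathcal L$ being the radial direction, so that $c(\tilde T)=(1-H)\,c\bigl(\tilde T^W\otimes\mathcal O_W(-1)\bigr)$, where $H$ is the hyperplane class pulled back from $\mathbb{P}^m$. Since the normal bundle of the zero section $\tilde W$ in $\tilde V=\mathrm{Tot}(\mathcal L)$ is $\mathcal L$, one also has $s\bigl(\mu^{-1}(p),\tilde V\bigr)=c(\mathcal L)^{-1}\cap[\tilde W]=(1-H)^{-1}\cap[\tilde W]$.

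Feeding these into $\Eu_V(p)=\int_{\mu^{-1}(p)}c(\tilde T)\cap s\bigl(\mu^{-1}(p),\tilde V\bigr)$, the factors $(1-H)$ and $(1-H)^{-1}$ cancel, leaving $\int_{\tilde W}c\bigl(\tilde T^W\otimes\mathcal O_W(-1)\bigr)\cap[\tilde W]$. Expanding the Chern class of the twist by the usual formula $c_i\bigl(\tilde T^W\otimes\mathcal O_W(-1)\bigr)=\sum_j\binom{d-j}{i-j}c_j(\tilde T^W)(-H)^{i-j}$ with $d=\dim W$, the zero-dimensional part forces $i=d$, whence $\binom{d-j}{d-j}=1$; then the projection formula and the definition $c_{Ma}(W)=\mu_{W\ast}\bigl(c(\tilde T^W)\cap[\tilde W]\bigr)$ turn the $j$-th term into $(-1)^{d-j}c_{Ma}(W)_{d-j}$. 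Reindexing by $k=d-j$ gives exactly $\sum_{j=0}^{\dim W}(-1)^jc_{Ma}(W)_j$.

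The hard part is the first step: rigorously checking that $\mathrm{Tot}\bigl(\mu_W^\ast\mathcal O_W(-1)\bigr)\to V$ really is the Nash blowup — that the limiting tangent planes of $V$ as $v\to 0$ are parametrized precisely by $\tilde W$ and nothing more, and that $\tilde T$ is as claimed — bearing in mind that $\tilde W$, hence $\tilde V$, may be singular, which is permitted since Nash blowups need not be smooth. It is also worth treating separately the degenerate case in which $W$ is a linear subspace (constant Gauss map): there $V$ is linear, $\Eu_V(p)=1$, and one checks $\sum_j(-1)^jc_{Ma}(\mathbb{P}^d)_j=\sum_j(-1)^j\binom{d+1}{d-j}=1$ directly, in agreement with the formula. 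Once the Nash-blowup description is secured, everything else is the routine Chern-class bookkeeping above.
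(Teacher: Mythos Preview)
This proposition is not proved in the paper at all; it is quoted as a background result from Aluffi \cite[Proposition~3.17]{MR3739192} and then invoked as a black box in the proof of \cref{first_formula}. So there is no ``paper's own proof'' to compare your attempt against.

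That said, your strategy via Gonz\'alez--Sprinberg's integral formula is the natural direct approach, and the Chern-class bookkeeping in your last two paragraphs is correct (modulo the harmless notational slip of writing $\mu_W^\ast(\tilde T^W\otimes\mathcal O_W(-1))$ when $\tilde T^W$ already lives on $\tilde W$; you mean the pullback along the bundle projection $\pi\colon\mathrm{Tot}(\mathcal L)\to\tilde W$ of $\tilde T^W\otimes\mu_W^\ast\mathcal O_W(-1)$). The real subtlety is the one you flag, but it is sharper than you acknowledge: $\mathrm{Tot}\bigl(\mu_W^\ast\mathcal O_W(-1)\bigr)$ is \emph{not} the Nash blowup of $V$ in general. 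Already in your linear case $\tilde V=V$ while your space is the ordinary blowup of $V$ at the origin; more generally the fibre of $\tilde V$ over $0$ is the \emph{image} of $\tilde W$ under the Gauss map, which need not be $\tilde W$ itself. The clean fix is not to argue for equality but to note that the map $(w,v)\mapsto(v,L_w)$ gives a proper birational morphism $\mathrm{Tot}\bigl(\mu_W^\ast\mathcal O_W(-1)\bigr)\to\tilde V$ which is an isomorphism away from the zero section (both sides being $(V\setminus\{0\})\times_W\tilde W$ there), and that your bundle is the pullback of the Nash bundle along it. Gonz\'alez--Sprinberg's integral is unchanged under such a further modification by the projection formula together with the birational invariance of Segre classes (Fulton, \emph{Intersection Theory}, Proposition~4.2(a)). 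With this adjustment your computation goes through uniformly, and the separate linear check becomes unnecessary.
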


\subsection{Chern-Schwartz-Macpherson classes of matroids} Recall that for a matroid $M$ that is realized by a $d$-dimensional linear subspace $V$ of $\mathbb{C}^{n}$, the stratum $X_{\mathcal{A},F}$ is isomorphic to $U_{\mathcal{A}^F}$, the complement of the hyperplane arrangement $\mathcal{A}^F$. Quotient everything by $\mathbb{C}^*$, we get that $\pi(X_{\mathcal{A},F})$ is the complement of the hyperplane arrangement $\mathbb{P}(\mathcal{A})$ in $\mathbb{P}(V)\cong\mathbb{CP}^{d-1}$. The CSM classes of such complements are calculated in \cite{MR3999674}. 

\begin{definition}
    Suppose $M$ is a rank $d$ matroid on an $n$ elements set. For $0\leq k\leq d-1$, the $k$-dimensional \emph{Chern-Schwartz-MacPherson (CSM) cycle} $\mathrm{csm}_k(M)$ is the $k$-dimensional skeleton of $\mathcal{B}(M)$ (the Bergman fan of $M$) equipped with weights on its top-dimensional cones. If $M$
    is a loopless matroid, the weight of the cone $\sigma_{\mathscr{F}}$ corresponding to a flag of flats $\mathscr{F}\coloneq\{\emptyset=F_0\subsetneq F_1\subsetneq \cdots \subsetneq F_k \subsetneq F_{k+1}=\{0,1,\cdots,n-1\}\}$ is
    \begin{equation*}
        w(\sigma_{\mathscr{F}})\coloneq (-1)^{d-1-k}\beta(M)[\mathscr{F}]= (-1)^{d-1-k}\prod_{i=0}^k \beta(M|F_{i+1}/F_i),
    \end{equation*}
    where $M|F_{i+1}/F_i=M_{F_i}^{F_{i+1}}$ denotes the minor of $M$ obtained by localizing to $F_{i+1}$ and contracting $F_i$, $\beta$ is the beta invariant of $M$. If $M$ has a loop then the CSM class is empty for all $k$.
\end{definition}

The CSM classes of matroids come from the so-called wonderful compactification of the complement of hyperplane arrangement $\mathbb{P}(U_\mathcal{A})$.

\begin{theorem}[{\cite{MR3999674}}]
    Let $W_\mathcal{A}$ be the wonderful compactification of $\mathbb{P}(U_\mathcal{A})$. Then
    \begin{equation*}
        c_*(\mathbbm{1}_{\mathbb{P}(U_\mathcal{A})})=\sum_{k=0}^{d-1} \mathrm{csm}_k(M) \in A_*(W_\mathcal{A}).
    \end{equation*}
\end{theorem}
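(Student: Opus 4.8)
The plan is to compute $c_*(\mathbbm{1}_{\mathbb{P}(U_\mathcal{A})})$ as an honest Chow class on the wonderful compactification $W_\mathcal{A}$ and then read off its image in the combinatorial model of the Bergman fan. The starting point is that $W_\mathcal{A}$ is a smooth projective variety of dimension $d-1$ whose boundary $D=\sum_{\emptyset\subsetneq F\subsetneq E}D_F$, indexed by the proper nonempty flats, is a simple normal crossing divisor with $D_{F_1}\cap\cdots\cap D_{F_k}\neq\emptyset$ exactly when $F_1\subsetneq\cdots\subsetneq F_k$ is a flag of flats, and moreover
\[
    D_{F_1}\cap\cdots\cap D_{F_k}\;\cong\;W_{M^{F_1}}\times W_{M_{F_1}^{F_2}}\times\cdots\times W_{M_{F_k}} .
\]
Since $\mathbb{P}(U_\mathcal{A})=W_\mathcal{A}\setminus D$ is the complement of a simple normal crossing divisor, I would invoke the theorem of Aluffi computing the Chern--Schwartz--MacPherson class of such a complement as the Chern class of the logarithmic tangent bundle, together with the residue sequence $0\to T_{W_\mathcal{A}}(-\log D)\to T_{W_\mathcal{A}}\to\bigoplus_F(i_{D_F})_*\mathcal{O}_{D_F}(D_F)\to 0$, to obtain
\[
    c_*(\mathbbm{1}_{\mathbb{P}(U_\mathcal{A})})\;=\;c\bigl(T_{W_\mathcal{A}}(-\log D)\bigr)\cap[W_\mathcal{A}]\;=\;\frac{c(T_{W_\mathcal{A}})}{\prod_F(1+[D_F])}\cap[W_\mathcal{A}] .
\]
This reduces the theorem to an identity of classes in $A_*(W_\mathcal{A})$.

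Next I would transport everything to the Bergman fan $\mathcal{B}(M)$. By the Feichtner--Yuzvinsky / De Concini--Procesi description, $A^*(W_\mathcal{A})$ is the Chow ring of $M$, with $x_F:=[D_F]$, and $\mathcal{B}(M)$ is unimodular, so for a flag $\mathscr{F}\colon\emptyset=F_0\subsetneq F_1\subsetneq\cdots\subsetneq F_k\subsetneq F_{k+1}=E$ the monomial $x_{F_1}\cdots x_{F_k}$ is precisely the class of the stratum $D_{F_1}\cap\cdots\cap D_{F_k}$; these monomials implement the isomorphism between $A_k(W_\mathcal{A})$ and the group of weights on the $k$-dimensional skeleton of $\mathcal{B}(M)$ by $c\mapsto\bigl(\sigma_{\mathscr{F}}\mapsto\deg(c\cdot x_{F_1}\cdots x_{F_k})\bigr)$. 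Under this correspondence $\sum_k\mathrm{csm}_k(M)$ is, by its very definition, the weight assignment $\sigma_{\mathscr{F}}\mapsto(-1)^{d-1-k}\prod_{i=0}^k\beta(M|F_{i+1}/F_i)$. So it suffices to prove, for every flag $\mathscr{F}$ of length $k$, that
\[
    \deg\Bigl(c\bigl(T_{W_\mathcal{A}}(-\log D)\bigr)\cdot x_{F_1}\cdots x_{F_k}\Bigr)\;=\;(-1)^{d-1-k}\prod_{i=0}^k\beta\bigl(M|F_{i+1}/F_i\bigr) .
\]

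To evaluate the left-hand side I would restrict $c(T_{W_\mathcal{A}}(-\log D))$ to the stratum $Z_{\mathscr{F}}=D_{F_1}\cap\cdots\cap D_{F_k}$: the logarithmic tangent bundle of an ambient simple normal crossing pair restricts on a boundary stratum to the logarithmic tangent bundle of that stratum, which for the product $Z_{\mathscr{F}}\cong\prod_{i=0}^kW_{M|F_{i+1}/F_i}$ is the exterior direct sum of the logarithmic tangent bundles of the factors. Taking top Chern classes and degrees turns the left-hand side into $\prod_{i=0}^k\chi_{\mathrm{top}}\bigl(\mathbb{P}(U_{M|F_{i+1}/F_i})\bigr)$. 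Now I would invoke the classical fact that the topological Euler characteristic of a projective arrangement complement equals the value at $1$ of the reduced characteristic polynomial $\chi_M(t)/(t-1)$, which for a rank-$r$ loopless matroid equals $(-1)^{r-1}\beta$; substituting into each factor and using $\sum_{i=0}^k(\rk(M|F_{i+1}/F_i)-1)=d-1-k$ yields exactly the claimed identity. Since $c_*(\mathbbm{1}_{\mathbb{P}(U_\mathcal{A})})$ and $\sum_k\mathrm{csm}_k(M)$ then have the same weight expansion in every skeleton, they coincide in $A_*(W_\mathcal{A})$.

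I expect the main obstacle to be not any single computation but the careful assembly of the geometric input: verifying that the boundary of the wonderful compactification is simple normal crossing with the stated product structure on its strata, that $A^*(W_\mathcal{A})$ really is the matroid Chow ring with $x_F=[D_F]$ and that $\mathcal{B}(M)$ is unimodular (so there are no extra multiplicities in $x_{F_1}\cdots x_{F_k}=[Z_{\mathscr{F}}]$), and that Aluffi's logarithmic formula together with the compatibility of the logarithmic tangent bundle with restriction to strata apply as stated. An alternative, closer to \cite{MR3999674}, would be to show that both sides satisfy the deletion--contraction recursion $\mathrm{csm}(M)=\mathrm{csm}(M\setminus e)-\iota_*\,\mathrm{csm}(M/e)$ --- geometrically from $\mathbbm{1}_{U_\mathcal{A}}=\mathbbm{1}_{U_{\mathcal{A}\setminus e}}-\mathbbm{1}_{U_{\mathcal{A}/e}}$ pushed through the blow-down $W_\mathcal{A}\to W_{\mathcal{A}\setminus e}$, combinatorially from deletion--contraction of $\beta$ --- and then induct with the rank-one matroid as base case; in that approach the difficulty shifts to controlling how the wonderful compactifications transform under deletion and contraction.
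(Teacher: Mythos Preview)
The paper does not prove this theorem at all: it is quoted as background from \cite{MR3999674} (L\'opez de Medrano--Rinc\'on--Shaw) and used as a black box in the proof of \cref{alternating_csm}. So there is no ``paper's own proof'' to compare against.

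That said, your sketch is a valid route to the result. The logarithmic Chern class formula $c_*(\mathbbm{1}_{W_\mathcal{A}\setminus D})=c(T_{W_\mathcal{A}}(-\log D))\cap[W_\mathcal{A}]$ (Aluffi, also Goresky--Pardon), the restriction property $c(T_X(-\log D))|_{Z_\mathscr{F}}=c(T_{Z_\mathscr{F}}(-\log D_{Z_\mathscr{F}}))$ coming from the residue sequence, and the identification of the degree of the latter with $\chi(Z_\mathscr{F}\setminus D_{Z_\mathscr{F}})=\prod_i(-1)^{\rk-1}\beta$ are all correct. One point you use implicitly and should make explicit: the map $A_k(W_\mathcal{A})\to\{\text{functions on $k$-flags}\}$, $c\mapsto\deg(c\cdot x_{F_1}\cdots x_{F_k})$, is \emph{injective} because squarefree monomials span $A^k$ in the matroid Chow ring and Poincar\'e duality holds; without this, matching weights would not determine the class.

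For the record, the original proof in \cite{MR3999674} follows your second suggested route: it proceeds by deletion--contraction, showing that both $c_*(\mathbbm{1}_{\mathbb{P}(U_\mathcal{A})})$ and $\sum_k\mathrm{csm}_k(M)$ satisfy the same recursion under removing an element, with a direct check in the base case. Your logarithmic approach is more intrinsic but requires more geometric input about the wonderful compactification; the recursive approach is more elementary and has the advantage of simultaneously defining $\mathrm{csm}_k(M)$ for non-realizable matroids.
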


\section{Local Euler obstructions of matroid Schubert varieties} 
This section is devoted to a recursive formula for the local Euler obstruction, see \cref{Recursive_Eu} below.

To start, let us fix a $V\subset \mathbb{C}^{n}$, and consider the corresponding matroid Schubert variety $Y_\mathcal{A}$. We stratify $Y_\mathcal{A}$ by $V$-orbits, and throughout this section, all constructible functions are constructible with respect to this stratification.

The affine reciprocal plane is an open subset of the matroid Schubert variety, so the Euler obstructions of $X_\mathcal{A}$ are exactly the Euler obstructions of $Y_\mathcal{A}$. By definition, the Euler obstruction function is a constructible function, so the Euler obstruction function is constant when restricting to each stratum. We will use $\Eu_M(F)$ to denote the value of $\mathrm{Eu}_{X_\mathcal{A}}$ at a point of $X_{\mathcal{A},F}$. By \ref{inductive_slice} and the fact an \'etale morphism keeps the value of $\Eu$ unchanged (An \'etale morphism is a local isomorphism in analytic geometry), we conclude that 
\begin{equation}
    \Eu_M(F)=\Eu_{X_\mathcal{A}}(x)=\Eu_{X_{\mathcal{A}_F}}(0)\Eu_{X_{\mathcal{A},F}}(x)=\Eu_{M_F}(\emptyset).
\end{equation}
Since $X_{\mathcal{A},F}$ is smooth, $\Eu_{X_{\mathcal{A},F}}(x)=1$. Notice that the original point in the affine chart $(\mathbb{C}^*\cup \infty)^{n}$ is exactly the orbit that corresponds to the empty flat $\emptyset$. In other words, the value at the $V$-fixed point $\{\infty\}^{n}$ is the only thing we need to calculate.

There is an equivalent way of defining the affine reciprocal plane. Let $i:(\mathbb{C}^*)^{n}\to (\mathbb{C}^*)^{n}$ be the map of taking inverse of each coordinate: $i(x_0,\cdots,x_n)=(x_0^{-1},\cdots,x_n^{-1})$. For a linear subspace $V\subset \mathbb{C}^{n}$, the affine reciprocal plane of $V$ is the closure of $i(V\cap (\mathbb{C}^*)^{n})$ in $\mathbb{C}^{n}$. The set $V\subset \mathbb{C}^{n}$ is invariant under the scaling action of $\mathbb{C}^*$, and so is $X_\mathcal{A}$. This shows $X_\mathcal{A}$ is the affine cone of a projective variety $\mathbb{P}(X_\mathcal{A})\subset \mathbb{P}^n$, the so-called projective reciprocal plane. We are in a position to apply Aluffi's formula, \cref{Aluffi_Theorem}.

\begin{lemma}\label{first_formula}
    Let $\mathcal{A}$ be a rank $d$ hyperplane arrangement represented by a $d$-dimensional subspace $V$ of $\mathbb{C}^{n}$ and let $M$ be the matroid of $\mathcal{A}$. Let $\mathcal{L}(M)$ denote the lattice of flats in $M$. Then we have
    $$\Eu_M(\emptyset)=\sum_{\emptyset\neq F\in \mathcal{L}(M)}\Eu_{M_F}(\emptyset)\left(\sum_{j=0}^{d-1}(-1)^j c_{SM}({\pi(X_{\mathcal{A},F})})_j\right).$$
\end{lemma}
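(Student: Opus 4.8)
The plan is to apply Aluffi's formula (\cref{Aluffi_Theorem}) to the cone $X_\mathcal{A}$, and then to expand the Chern--Mather class that appears using the linearity of MacPherson's transformation $c_*$; the point is that the local Euler obstruction of $X_\mathcal{A}$ on each stratum has already been pinned down in equation~(1).

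First I would record the cone structure. As noted just before the statement, $X_\mathcal{A}=\overline{i(V\cap(\mathbb{C}^*)^{n})}$ is stable under the scaling action of $\mathbb{C}^*$ on $\mathbb{C}^{n}$, so it is the affine cone over the projective reciprocal plane $W:=\mathbb{P}(X_\mathcal{A})$, with $\dim W=d-1$, and the all-infinity point of $Y_\mathcal{A}$ corresponds under $i$ to the cone vertex $0=X_{\mathcal{A},\emptyset}$. Applying \cref{Aluffi_Theorem} with this cone and $p=0$ gives
\[
\Eu_M(\emptyset)=\Eu_{X_\mathcal{A}}(0)=\sum_{j=0}^{d-1}(-1)^{j}\,c_{Ma}(W)_{j},
\]
so it suffices to compute $c_{Ma}(W)=c_*(\Eu_W)$.

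Next I would decompose $\Eu_W$ over strata. Each $V$-orbit stratum $X_{\mathcal{A},F}$ of $X_\mathcal{A}$ is $\mathbb{C}^*$-invariant (scaling takes no finite nonzero coordinate to $\infty$, nor conversely), so for every nonempty flat $F$ the stratum descends to a locally closed subset $\pi(X_{\mathcal{A},F})\cong U_{\mathbb{P}(\mathcal{A}^F)}$ of $W$, and these pieces partition $W$. Since $\Eu_W$ is constructible it is constant on each $\pi(X_{\mathcal{A},F})$, and I claim the value is $\Eu_{M_F}(\emptyset)$: the projection $X_\mathcal{A}\setminus\{0\}\to W$ is a $\mathbb{C}^*$-bundle, hence Zariski-locally a product with $\mathbb{C}^*$, so at a point $x$ lying over a point of $\pi(X_{\mathcal{A},F})$ the variety $X_\mathcal{A}$ is locally $W\times\mathbb{C}^*$; because $\Eu$ is a local analytic invariant, is multiplicative under products, and $\Eu_{\mathbb{C}^*}\equiv 1$, the value of $\Eu_W$ at $\pi(x)$ equals $\Eu_{X_\mathcal{A}}(x)$, which by equation~(1) is $\Eu_M(F)=\Eu_{M_F}(\emptyset)$. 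Thus $\Eu_W=\sum_{\emptyset\neq F\in\mathcal{L}(M)}\Eu_{M_F}(\emptyset)\,\mathbbm{1}_{\pi(X_{\mathcal{A},F})}$ as constructible functions on $W$.

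Finally I would push everything through $c_*$. By linearity of MacPherson's natural transformation, together with $c_*(\Eu_W)=c_{Ma}(W)$ and $c_*(\mathbbm{1}_{\pi(X_{\mathcal{A},F})})=c_{SM}(\pi(X_{\mathcal{A},F}))$, we get $c_{Ma}(W)=\sum_{\emptyset\neq F}\Eu_{M_F}(\emptyset)\,c_{SM}(\pi(X_{\mathcal{A},F}))$. Substituting into the displayed Aluffi formula, taking degrees of $j$-dimensional components and interchanging the two summations yields precisely the asserted identity; the inner sum may be run up to $d-1$ for every $F$ since $\dim\pi(X_{\mathcal{A},F})=\rk F-1\le d-1$ makes the extra terms vanish. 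I expect the main obstacle to be the middle step: carefully verifying that $\Eu_W$ takes the value $\Eu_{M_F}(\emptyset)$ on the descended stratum, i.e.\ controlling how $\Eu$ behaves under the passage from the cone $X_\mathcal{A}$ to its projectivization $W$, and checking that the orbit stratification descends to a genuine (Whitney) stratification of $W$; once that is in hand, everything else is formal bookkeeping.
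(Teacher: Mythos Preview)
Your proposal is correct and follows essentially the same route as the paper: apply \cref{Aluffi_Theorem} to the cone $X_\mathcal{A}$ over $\mathbb{P}(X_\mathcal{A})$, use the $\mathbb{C}^*$-bundle $X_\mathcal{A}\setminus\{0\}\to\mathbb{P}(X_\mathcal{A})$ together with equation~(1) to identify $\Eu_{\mathbb{P}(X_\mathcal{A})}$ stratum by stratum, and then push the resulting decomposition through MacPherson's transformation $c_*$. The concern you flag about descending $\Eu$ along the $\mathbb{C}^*$-bundle is exactly what the paper handles in one line (``$\pi$ exhibits $X_\mathcal{A}-\{\infty\}$ as a $\mathbb{C}^*$ bundle over $\mathbb{P}(X_\mathcal{A})$, we conclude that $\Eu_{X_\mathcal{A}}(x)=\Eu_{\mathbb{P}(X_\mathcal{A})}(\pi(x))$''), and no separate verification that the descended strata form a Whitney stratification is needed for the argument.
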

\begin{proof}
    Apply Aluffi's result \cref{Aluffi_Theorem} to the affine reciprocal plane, we get
\begin{equation}\label{Eu_equation}
    \Eu_M(\emptyset)=\sum_{j=0}^{d-1}(-1)^j c_{Ma}(\mathbb{P}(X_\mathcal{A}))_j.
\end{equation}
The projective reciprocal plane has a stratification induced by the projection map $\pi:X_\mathcal{A}-\{\infty\}\to \mathbb{P}(X_\mathcal{A})$. It is the quotient map by $\mathbb{C}^*$-action. Since $\pi$ exhibits $X_\mathcal{A}-\{\infty\}$ as a $\mathbb{C}^*$ bundle over $\mathbb{P}(X_\mathcal{A})$, we conclude that $\Eu_{X_\mathcal{A}}(x)=\Eu_{\mathbb{P}(X_\mathcal{A})}(\pi(x))$ for $x\neq\infty$. Using this fact, we can write $\Eu_{\mathbb{P}(X_\mathcal{A})}$ as
\begin{equation}
    \Eu_{\mathbb{P}(X_\mathcal{A})}=\sum_{\emptyset\neq F\in \mathcal{L}(M)}\Eu_M(F)\mathbbm{1}_{\pi(X_{\mathcal{A},F})},
\end{equation}
where $\mathbbm{1}_{\pi(X_{\mathcal{A},F})}$ is the indicator function of $\pi(X_{\mathcal{A},F})$, the stratum corresponding to $F$ in the projective reciprocal plane. Putting together with \cref{Eu_equation}, we conclude that
\begin{equation*}
    \begin{aligned}
        \Eu_M(\emptyset)&=\sum_{j=0}^{d-1}(-1)^j c_{Ma}(\mathbb{P}(X_\mathcal{A}))_j\\
        &=\sum_{j=0}^{d-1}(-1)^j c_*\left(\sum_{\emptyset\neq F\in \mathcal{L}(M)}\Eu_M(F)\mathbbm{1}_{\pi(X_{\mathcal{A},F})}\right)_j\\
        &=\sum_{\emptyset\neq F\in \mathcal{L}(M)}\Eu_M(F)\left(\sum_{j=0}^{d-1}(-1)^j c_*(\mathbbm{1}_{\pi(X_{\mathcal{A},F})})_j\right)\\
        &=\sum_{\emptyset\neq F\in \mathcal{L}(M)}\Eu_{M_F}(\emptyset)\left(\sum_{j=0}^{d-1}(-1)^j c_{SM}({\pi(X_{\mathcal{A},F})})_j\right)
    \end{aligned}
\end{equation*}
\end{proof}

The notation $c_{SM}({\pi(X_{\mathcal{A},F})})$ refers to the Chern-Schwartz-Macpherson class of $\pi(X_{\mathcal{A},F})$. Once we compute the degrees of these classes, we can inductively compute the local Euler obstruction numbers of a matroid Schubert variety. Because the closure of $X_{\mathcal{A},F}$ is $X_{\mathcal{A}^F}$, the calculation is only necessary when $F=\emptyset$. Recall that $X_{M,\emptyset}$ is $U_\mathcal{A}$, the complement of the hyperplane arrangement $\mathcal{A}$, and $\mathbb{P}(U_\mathcal{A})$ is the complement of the projective hyperplane arrangement.

\begin{lemma}\label{alternating_csm}
    Let 
    \begin{equation*}
        c_M\coloneqq\sum_{j=0}^{d-1}(-1)^j c_{SM}(\mathbb{P}(U_\mathcal{A}))_j,
    \end{equation*}
    then
    \[
        c_M=(-1)^{d-1}\sum_\mathscr{F}\beta(M)[\mathscr{F}],
    \]
    where $\mathscr{F}$ ranges over all descending flags of flats. For a flag of flats $\emptyset\subset F_1 \subset \cdots \subset F_l\subset M$, it is descending if $\min(F_1)>\min(F_2)>\cdots>\min(F_l)>0$, and the quantity $\beta(M)[\mathscr{F}]=\beta(M^{F_1})\beta(M_{F_1}^{F_2})\cdots\beta(M_{F_l})$ is a product of beta invariants.
\end{lemma}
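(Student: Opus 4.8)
The plan is to evaluate the alternating sum of degrees of CSM classes of $\mathbb{P}(U_\mathcal{A})$ using the combinatorial formula for the CSM cycle of a matroid recalled above. By the theorem of López de Medrano–Rincón–Shaw, $c_*(\mathbbm{1}_{\mathbb{P}(U_\mathcal{A})})=\sum_{k=0}^{d-1}\mathrm{csm}_k(M)$ inside $A_*(W_\mathcal{A})$, and pushing forward to $\mathbb{P}^{d-1}$ (or reading off degrees directly from the tropical cycle) gives that $c_{SM}(\mathbb{P}(U_\mathcal{A}))_k$ is the degree of $\mathrm{csm}_k(M)$, i.e. the tropical intersection number of the weighted $k$-skeleton of the Bergman fan $\mathcal{B}(M)$ with a generic codimension-$k$ linear space. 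Concretely this degree is a signed sum over complete flags of flats $\mathscr{F}=\{\emptyset=F_0\subsetneq F_1\subsetneq\cdots\subsetneq F_k\subsetneq F_{k+1}=E\}$ of the weights $(-1)^{d-1-k}\prod_{i=0}^k\beta(M|F_{i+1}/F_i)$, with an appropriate multiplicity counting which maximal cones of the $k$-skeleton meet the generic linear space. After factoring out the sign $(-1)^{d-1-k}$ from $\mathrm{csm}_k$, the alternating sum $\sum_k(-1)^k c_{SM}(\mathbb{P}(U_\mathcal{A}))_k$ collects a global sign $(-1)^{d-1}$, and what remains is a sum of products of $\beta$-invariants over flags.

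The second and main step is to see that, after this sign bookkeeping, the flags that survive with nonzero coefficient are exactly the \emph{descending} flags $\emptyset\subsetneq F_1\subsetneq\cdots\subsetneq F_l\subsetneq E$ with $\min(F_1)>\cdots>\min(F_l)>0$, each contributing the single product $\beta(M^{F_1})\beta(M_{F_1}^{F_2})\cdots\beta(M_{F_l})$. The natural mechanism is a choice of \emph{generic} affine linear subspace used to compute the tropical degree: taking the linear space determined by a flag of coordinate subspaces adapted to the ordering $0<1<\cdots<n-1$ of the ground set, the condition that a cone $\sigma_{\mathscr F}$ of the Bergman fan meets this particular linear subspace in the expected dimension translates into a combinatorial condition on the minimal elements of the flats in $\mathscr F$ — precisely the descending condition. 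Equivalently, one can argue via the known expansion of $c_{SM}$ classes of arrangement complements in terms of broken-circuit-type data, or via the recursion $\chi_M(t+1)=\sum_{F}(-1)^{\rk F}\mu(M^F)\,t^{d-\rk F}$-style identities together with Speyer's formula $\beta$-summation, but the cleanest route is the direct tropical degree computation with the adapted generic flat.

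The technical heart — the step I expect to be the real obstacle — is justifying that the stable/tropical intersection of $\mathrm{csm}_k(M)$ with the chosen non-generic-but-sufficiently-transverse linear space can be computed cone by cone, and that the local intersection multiplicity at each cone $\sigma_{\mathscr F}$ is $1$ exactly when $\mathscr F$ restricts to a descending flag and $0$ otherwise. This requires: (i) checking that the adapted coordinate flag is transverse enough to all cones of $\mathcal{B}(M)$ that the intersection product is still represented by an honest fan, and (ii) a lattice-index computation showing the multiplicity is a $0/1$ quantity governed by the minima $\min(F_i)$. Once (i) and (ii) are in hand, the rest is assembling the signs and relabeling the surviving flags, which is routine. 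I would set up (i)–(ii) as a short preliminary lemma on Bergman fans, then let the main computation fall out by summing the surviving contributions and extracting the overall $(-1)^{d-1}$.
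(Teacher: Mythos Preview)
Your overall strategy---compute each $c_{SM}(\mathbb{P}(U_\mathcal{A}))_j$ by pairing $\mathrm{csm}_j(M)$ with the $j$-th power of a hyperplane class, then sum with signs so that the $(-1)^{d-1-j}$ in the weight cancels against the $(-1)^j$ in the alternating sum---is exactly what the paper does. Where you diverge is in how the descending-flag condition enters.

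The paper does not push forward to $\mathbb{P}^{d-1}=\mathbb{P}(V)$; it uses the birational map $p\colon W_\mathcal{A}\to\mathbb{P}(X_\mathcal{A})\subset\mathbb{P}^{n-1}$ coming from the \emph{reciprocal} embedding. The pullback of that hyperplane class is the beta class $\beta_M=\sum_{i\notin G}x_G\in A^1(W_\mathcal{A})$, and the crucial input is the already-known combinatorial formula
\[
\beta_M^{\,j}=\sum_{\mathscr{F}\ \text{descending},\ |\mathscr{F}|=j} x_{\mathscr{F}}
\]
from \cite{10.4007/annals.2018.188.2.1} and \cite{BRADEN2022108646}. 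Once this is invoked, pairing with $\mathrm{csm}_j(M)$ simply reads off the weight $(-1)^{d-1-j}\beta(M)[\mathscr{F}]$ on each cone $\sigma_{\mathscr{F}}$, and the proof is finished by the projection formula; no transversality or lattice-index argument is needed.

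Your proposed route---choosing a coordinate-adapted linear subspace in the ambient space of the Bergman fan and checking cone by cone that the intersection multiplicity is $1$ exactly on descending flags---is essentially a tropical re-proof of the identity $\beta_M^{\,j}=\sum x_{\mathscr{F}}$ from scratch. It can be made to work, but it is precisely the content of that known lemma, not a new obstacle, and you should just cite it. Note also that if you instead push forward to $\mathbb{P}^{d-1}$ as you first suggest, the pullback of the hyperplane class is $\alpha_M$, not $\beta_M$; powers of $\alpha_M$ expand over flags satisfying a different (``initial'') condition, so that route would yield an equivalent degree but not the descending-flag formula asserted in the lemma.
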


The proof of the lemma uses the wonderful compactification $W_\mathcal{A}$ of the projective hyperplane arrangement $\mathbb{P}(U_\mathcal{A})$. It was first defined in \cite{Concini1995WonderfulMO}.
\begin{proof}[Proof of \cref{alternating_csm}]
    There is a birational map $p$ from $W_\mathcal{A}$ to $\mathbb{P}(X_\mathcal{A})$, extended from the inclusion map $\mathbb{P}(U_\mathcal{A})\to \mathbb{P}(X_\mathcal{A})$. Since $c_*$ commutes with proper pushforward, the class $c_*(\mathbbm{1}_{\mathbb{P}(U_\mathcal{A})})\in A_*(\mathbb{P}^n)$ is the pushforward of $c_*(\mathbbm{1}_{\mathbb{P}(U_\mathcal{A})})\in A_*(W_\mathcal{A})$. Let $h\in A^1(\mathbb{P}^n)$ be the hyperplane class of $\mathbb{P}^n$. The sum $$\sum_{j=0}^{d-1}(-1)^j c_{SM}(\mathbb{P}(U_\mathcal{A}))_j$$ can be rewritten as $$\sum_{j=0}^{d-1}(-1)^j c_{SM}(\mathbb{P}(U_\mathcal{A}))\cap h^j$$ in $A_0(\mathbb{P}^n)\cong \mathbb{Z}$. By the projection formula, we have
\begin{equation*}
    c_{SM}(\mathbb{P}(U_\mathcal{A}))\cap h^j=p_*(c_*(\mathbbm{1}_{\mathbb{P}(U_\mathcal{A})}))\cap h^j=p_*(c_*(\mathbbm{1}_{\mathbb{P}(U_\mathcal{A})})\cap (p^*h)^j)
\end{equation*}
The pushforward $p_*$ is the identity map on $A_0$ because $p$ is birational. It remains to calculate the element $$c_*(\mathbbm{1}_{\mathbb{P}(U_\mathcal{A})})\cap (p^*h)^j\in A_0(W_\mathcal{A})\cong \mathbb{Z}.$$

The hyperplane class $p^*h\in A^1(W_\mathcal{A})$ was studied in the papers \cite{10.4007/annals.2018.188.2.1} and \cite{BRADEN2022108646}. We will adopt their notations. It is the beta class of $M$
\begin{equation*}
    \beta_M=\sum_{i\notin G}x_G\in A^1(W_\mathcal{A}),
\end{equation*}
where the sum is over all nonempty proper flats $G$ of $M$ not containing a given element $i$ in $E$. The definition does not depend on the choice of $i$. Moreover, there is a combinatorial formula for $\beta_M^j$:
\begin{equation*}
    \beta_M^j=\sum_{\mathscr{F}}x_{\mathscr{F}}
\end{equation*}
where the sum is over all descending $j$-step flags of nonempty proper flags of $M$, and $x_{\mathscr{F}}=x_{F_1}x_{F_2}\cdots x_{F_j}$ where $\mathscr{F}:F_1\subset F_2\subset\cdots\subset F_j$. Recall that a flag of flats is said to be descending if $\min(F_1)>\min(F_2)>\cdots>\min(F_j)>0$. Putting together all these facts, we conclude that
\begin{equation*}
    \begin{aligned}
        c_*(\mathbbm{1}_{\mathbb{P}(U_\mathcal{A})})\cap (p^*h)^j&=\mathrm{csm}_j(M)\cap \beta_M^j\\&=\sum_{\mathscr{F}}\mathrm{csm}_j(M)\cap x_{\mathscr{F}}\\&=\sum_{\mathscr{F}}\mathrm{csm}_j(M)(\sigma_{\mathscr{F}})\\&=\sum_{\mathscr{F}}(-1)^{d-1-j}\beta(M)[\mathscr{F}],
    \end{aligned}
\end{equation*}
and the alternating sum of degrees of the CSM classes is
\begin{equation*}
    \begin{aligned}
        \sum_{j=0}^{d-1}(-1)^j c_{SM}(\mathbb{P}(U_\mathcal{A}))_j&=\sum_{j=0}^{d-1}(-1)^j \sum_{\mathscr{F}}(-1)^{d-1-j}\beta(M)[\mathscr{F}]\\&=(-1)^{d-1}\sum_{j=0}^{d-1} \sum_{\mathscr{F}}\beta(M)[\mathscr{F}],
    \end{aligned}
\end{equation*}
where the internal sum is over all $j$-step descending flags, and $j$ ranges over $0$ to $d$. As a conclusion, the alternating sum on the CSM classes is
\begin{equation*}
    \sum_{j=0}^{d-1}(-1)^j c_{SM}(\mathbb{P}(U_\mathcal{A}))_j=(-1)^{d-1} \sum_{\mathscr{F}} \beta(M)[\mathscr{F}]=c_M,
\end{equation*}
where the sum is over all descending flags of flats, from $0$-step ($\emptyset\subset M$) to $(d-1)$-step ($\emptyset\subset F_1\subset\cdots\subset F_{d-1}\subset M$). 
\end{proof}

Combining two lemmas, we obtain a recursive formula for $\Eu_M$:
\begin{proposition}\label{Recursive_Eu}
    For a loopless matroid $M$, $\Eu_M$ can be recursively calculated by the following formula:
    \[
        \Eu_M=\sum_{F\neq\emptyset}c_{M^F}\Eu_{M_F},
    \]
    and $\Eu_\emptyset=1$.
\end{proposition}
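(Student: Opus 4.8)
The plan is to obtain the recursion by substituting \cref{alternating_csm} into \cref{first_formula}; the only geometric point still to be settled is the identification of the strata $\pi(X_{\mathcal{A},F})$ occurring in \cref{first_formula} with projectivized complements of localized arrangements, so that \cref{alternating_csm} becomes applicable to them.

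First I would fix a nonempty flat $F$ and recall from \cite[Proposition 3.1]{ELIAS201636} that $X_{\mathcal{A},F}$ is isomorphic to $U_{\mathcal{A}^F}$, the complement of the rank-$\rk F$ hyperplane arrangement $\mathcal{A}^F$ obtained by localizing $\mathcal{A}$ at $F$ (realized by $\pi_F(V)\subseteq\mathbb{C}^F$, a subspace of dimension $\rk F$). This isomorphism drops the constant coordinates and is therefore $\mathbb{C}^*$-equivariant for the scaling actions; passing to quotients, it identifies $\pi(X_{\mathcal{A},F})$ with $\mathbb{P}(U_{\mathcal{A}^F})$, a quasiprojective variety of dimension $\rk F-1$. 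Since $M$ is loopless and $F$ is a flat, the localization $M^F$ is again a loopless matroid, so \cref{alternating_csm} applies to $M^F$ and, together with the vanishing of $c_{SM}(\pi(X_{\mathcal{A},F}))_j$ for $j\geq\rk F$, gives
\[
\sum_{j=0}^{d-1}(-1)^j c_{SM}(\pi(X_{\mathcal{A},F}))_j \;=\; \sum_{j=0}^{\rk F-1}(-1)^j c_{SM}(\mathbb{P}(U_{\mathcal{A}^F}))_j \;=\; c_{M^F}.
\]

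Substituting this identity termwise into \cref{first_formula} immediately yields
\[
\Eu_M(\emptyset) \;=\; \sum_{\emptyset\neq F\in\mathcal{L}(M)} c_{M^F}\,\Eu_{M_F}(\emptyset),
\]
which is the asserted formula under the abbreviation $\Eu_M:=\Eu_M(\emptyset)$. For the base case I would note that the rank-$0$ (empty) matroid has $X_{\mathcal{A}}$ equal to a single smooth point, so $\Eu_\emptyset=1$; this has to be recorded separately, since for the empty matroid the index set $\{\emptyset\neq F\in\mathcal{L}(M)\}$ is empty and the displayed sum would give $0$. To see that the recursion is well-founded, observe that for $F\neq\emptyset$ the contraction $M_F$ has rank $d-\rk F<d$ and is again loopless (because $F$ is a flat), and that all the coefficient matroids $M^F$ are loopless as well; thus the right-hand side involves only $\Eu$ of loopless matroids of strictly smaller rank, while \cref{alternating_csm} expresses each $c_{M^F}$ combinatorially.

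There is no substantial obstacle; the one thing requiring care is the rank and dimension bookkeeping. One must apply \cref{alternating_csm} to $M^F$ at its own rank $\rk F$ rather than at the ambient rank $d$, check that the summation range in \cref{first_formula} is harmlessly wider than the one appearing in \cref{alternating_csm} (the extra terms vanishing for dimension reasons), and keep in mind that the descending-flag data defining $c_{M^F}$ is read off from the ground set and minimum-element ordering of $M^F$, i.e.\ of the restriction to $F$---all of which is already built into the definition of $c_{M^F}$ from \cref{alternating_csm}.
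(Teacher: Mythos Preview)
Your proposal is correct and follows exactly the route the paper takes: the proposition is stated immediately after \cref{first_formula} and \cref{alternating_csm} with the sentence ``Combining two lemmas, we obtain a recursive formula for $\Eu_M$,'' and the only intermediate step---identifying $\pi(X_{\mathcal{A},F})$ with $\mathbb{P}(U_{\mathcal{A}^F})$ so that \cref{alternating_csm} applies to each localization---is precisely the one you supply. Your bookkeeping on the summation range and on the base case is accurate and makes explicit what the paper leaves implicit.
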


We will solve this recursion in the next section. We end this section with an inductive formula for $c_M$'s.
\begin{lemma}\label{Recursive_c}
    The constant $c_M$ is determined by the following recursive formula:
    \begin{equation}\label{recursive_formula_for_c}
        c_M=(-1)^{d-1} \beta(M)+\sum_{0\notin F\neq \emptyset}(-1)^{\mathrm{cork}F} c_{M^F}\beta(M_F)
    \end{equation}
    and
    \begin{equation*}
        c_\emptyset=-1.
    \end{equation*}
\end{lemma}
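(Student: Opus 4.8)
The plan is to derive the recursion from the closed formula for $c_M$ already established in \cref{alternating_csm}, namely $c_M=(-1)^{d-1}\sum_{\mathscr{F}}\beta(M)[\mathscr{F}]$, where $\mathscr{F}$ runs over all descending flags of nonempty proper flats of $M$, the empty flag included (contributing $\beta(M)$). Put $b_M:=\sum_{\mathscr{F}}\beta(M)[\mathscr{F}]$, so that $c_M=(-1)^{d-1}b_M$ and, applied to the localization, $c_{M^F}=(-1)^{\rk F-1}b_{M^F}$. Then the statement reduces to the sign-free identity
\[
    b_M=\beta(M)+\sum_{0\notin F\neq\emptyset}b_{M^F}\,\beta(M_F).
\]
Indeed, substituting $b_M=(-1)^{d-1}c_M$ and $b_{M^F}=(-1)^{\rk F-1}c_{M^F}$ and using $(-1)^{d-1}(-1)^{\rk F-1}=(-1)^{\,d-\rk F}=(-1)^{\mathrm{cork}F}$ converts it into the asserted formula. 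The boundary value $c_\emptyset=-1$ is a convention for the rank-zero matroid; the recursion itself never invokes it, since every nonempty flat of a loopless matroid has positive rank.

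To prove the sign-free identity I would sort the descending flags of $M$ by their largest flat. The empty flag supplies the isolated term $\beta(M)$. A nonempty descending flag $\emptyset\subset F_1\subset\cdots\subset F_l\subset M$ has largest member $F:=F_l$, a nonempty proper flat with $\min F>0$, i.e. $0\notin F$; conversely, for each such $F$ the descending flags of $M$ with top flat $F$ are in bijection with the descending flags $\mathscr{F}'$ of the localization $M^F$, by deleting, respectively re-inserting, $F$ at the top (the empty flag of $M^F$ corresponding to the single-flat flag $\emptyset\subset F\subset M$). Under this bijection $\beta(M)[\mathscr{F}]$ factors as $\beta(M^F)[\mathscr{F}']\cdot\beta(M_F)$: the last factor $\beta(M_{F_l})$ of $\beta(M)[\mathscr{F}]$ equals $\beta(M_F)$, while every preceding factor is a $\beta$-invariant of a minor of $M$ supported inside the flat $F$ and hence agrees with the corresponding factor of $\beta(M^F)[\mathscr{F}']$ — concretely $M^{F_1}=(M^F)^{F_1}$, $M_{F_i}^{F_{i+1}}=(M^F)_{F_i}^{F_{i+1}}$ for $F_i\subseteq F_{i+1}\subsetneq F$, and $M_{F_{l-1}}^{F_l}=(M^F)_{F_{l-1}}$, all because $F$ is a flat. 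Summing over the truncated flag for fixed $F$ gives $\sum_{\mathscr{F}'}\beta(M^F)[\mathscr{F}']=b_{M^F}$, and summing over $F$ then yields the displayed identity.

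The step that needs the most care is this bijection of descending flags under restriction to $M^F$. A descending flag is defined using the minimal element $0$ of the ambient ground set $E$, but $0\notin F$, so the notion of "descending" for $M^F$ must be read with respect to the order that $F$ inherits from $E$, whose minimal element is $\min F=\min F_l$. This is legitimate because \cref{alternating_csm} — equivalently, the monomial expansion of powers of the $\beta$-class, and with it the very notion of a descending flag — holds for any linear order on the ground set. Granting that, the chain $\min F_1>\cdots>\min F_{l-1}>\min F_l$ says exactly that $\mathscr{F}'$ is descending in $M^F$, and everything else — partitioning the index set, the multiplicativity of $\beta(M)[\mathscr{F}]$, and the sign arithmetic — is routine bookkeeping.
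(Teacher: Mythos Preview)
Your proof is correct and follows essentially the same route as the paper: both start from the closed formula of \cref{alternating_csm}, separate off the empty flag, and group the remaining descending flags by their top flat $F$, recognizing the inner sum as $c_{M^F}$ (up to sign). Your introduction of $b_M$ merely repackages the sign bookkeeping, and you are in fact more careful than the paper on one point---explicitly noting that ``descending'' in $M^F$ must be read relative to the induced order on $F$ and that this is harmless because \cref{alternating_csm} holds for any linear order on the ground set.
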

\begin{proof}
    By definition,
    \[
        c_M= (-1)^{d-1} \sum_{\mathscr{F}} \beta(M)[\mathscr{F}],
    \]
    where $\mathscr{F}$ ranges over all descending flags of flats. The 0-step flag of flats is just $\emptyset\subset M$, therefore:
    \begin{equation*}
        \begin{aligned}
            c_M&= (-1)^{d-1} \beta(M)+(-1)^{d-1}\sum_{k=1}^{d-1}\sum_{\emptyset \subset F_1\subset\cdots\subset F_k\subset M}\beta(M^{F_1})\beta(M_{F_1}^{F_2})\cdots\beta(M_{F_k})\\
            &=(-1)^{d-1} \beta(M)+(-1)^{d-1}\sum_{k=1}^{d-1}\sum_{0\notin F_k\neq\emptyset}\beta(M_{F_k})\sum_{\emptyset\subset F_1\subset\cdots\subset F_k}\beta(M^{F_1})\beta(M_{F_1}^{F_2})\cdots\beta(M_{F_{k-1}}^{F_k})\\
            &=(-1)^{d-1} \beta(M)+(-1)^{d-1} \sum_{0\notin F\neq\emptyset} \beta(M_F)(-1)^{\mathrm{rk}(F)-1}c_{M^F}\\
            &=(-1)^{d-1} \beta(M)+ \sum_{0\notin F\neq\emptyset}(-1)^{\mathrm{cork}(F)} \beta(M_F)c_{M^F}.
        \end{aligned}
    \end{equation*}

    Notice that the flag $\emptyset\subset F_1\subset\cdots\subset F_k$ is a descending flag of flats in the localization $M^{F_k}$.
\end{proof}

\section{Solving for the Euler obstruction number}

To solve the relation in \cref{Recursive_Eu}, we first have to solve for the constants $c_M$. The following identity is the key fact.

\begin{lemma}
    For any non-empty loopless matroid $M$ and any element $0\in E$, the following identity holds:
    \begin{equation}
        \overline\chi_M(t)=\sum_{0\notin F}\chi_{M^F}(t)(-t)^{\mathrm{rk}M_F-1}\beta(M_F)
    \end{equation}
    where $F$ ranges over all flats that do not contain $0$. 
\end{lemma}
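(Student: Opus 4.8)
The identity to prove is
\begin{equation*}
\overline\chi_M(t)=\sum_{0\notin F}\chi_{M^F}(t)(-t)^{\operatorname{rk}M_F-1}\beta(M_F),
\end{equation*}
where $\overline\chi_M(t)=\chi_M(t)/(t-1)$ is the reduced characteristic polynomial and the sum is over flats $F$ (including $F=\emptyset$) not containing the fixed element $0$. The plan is to prove this purely combinatorially, by a deletion-contraction / identity-of-polynomials argument, rather than geometrically. Since both sides are polynomials in $t$, it suffices to verify the identity, and the natural engine is the classical expansion of the characteristic polynomial in terms of the beta invariant.

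First I would recall (or cite) the standard fact that for a loopless matroid $N$ one has $\beta(N)=(-1)^{\operatorname{rk}N}\,\overline\chi_N(1)\cdot$ (a sign), more usefully the Crapo-type identity expressing $\chi_N$ via a sum of $\beta$'s of contractions, together with the Weisner/modular-element style recursions for $\chi_M$. The cleanest route is probably to fix the element $0$ and stratify the flats of $M$ by their intersection behavior with $0$: every flat $G$ of $M$ either contains $0$ or not, and the flats not containing $0$ are exactly the flats $F$ of $M$ with $\operatorname{cl}(F\cup 0)\neq F$; moreover the interval $[\,\widehat 0,\operatorname{cl}(0)\,]^{c}$ decomposes the lattice of flats. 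I would then expand $\chi_M(t)=\sum_{G\in\mathcal L(M)}\mu(\widehat 0,G)t^{\operatorname{rk}M-\operatorname{rk}G}$, split the sum according to whether $0\in G$, and use the contraction $M_F$ to reorganize the part with $0\in G$: a flat $G\ni 0$ is the same as $\operatorname{cl}(F\cup 0)$ for $F=G\setminus$(stuff), and the Möbius function factors through the contraction. The factor $(-t)^{\operatorname{rk}M_F-1}\beta(M_F)$ should emerge as precisely $\sum_{G\supseteq \operatorname{cl}(F\cup 0)}\mu(\operatorname{cl}(F\cup 0),G)t^{\operatorname{rk}M-\operatorname{rk}G}$ divided by $(t-1)$, i.e. as the reduced characteristic polynomial of $M_F$ up to the known sign, which is where the beta invariant enters via $\overline\chi_{M_F}(t)$ being divisible appropriately or via $\beta(M_F)=(-1)^{\operatorname{rk}M_F}\chi'_{M_F}(1)$ relations — this bookkeeping of ranks, coranks, and signs is the step I expect to be the main obstacle, since it is easy to be off by a sign or by a $t$ versus $t-1$ factor.

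Concretely, the key steps in order: (1) write down the Möbius/characteristic-polynomial expansion of $\chi_M$ and divide by $t-1$ to get $\overline\chi_M$; (2) partition $\mathcal L(M)$ into flats containing $0$ and flats not containing $0$, and set up the bijection $G\ni 0 \leftrightarrow (F,\text{data})$ with $F=$ the largest flat not containing $0$ below $G$ in the relevant sense, so that the sum over $G\ni 0$ refactors as an outer sum over $0\notin F$ times an inner sum over $[\operatorname{cl}(F\cup 0),\widehat 1]\cong \mathcal L(M_F)$; (3) identify the inner sum with $\chi_{M^F}(t)$ times a contraction factor, recognizing $\mathcal L(M^F)\cong [\widehat 0,F]$; (4) collapse the contraction factor into $(-t)^{\operatorname{rk}M_F-1}\beta(M_F)$ using the standard formula $\beta(N)=(-1)^{\operatorname{rk}N}\sum_{k}(-1)^{k}w_k(N)\cdot(\text{coefficients})$, equivalently $(t-1)\mid \chi_N(t)$ with $\overline\chi_N(1)=(-1)^{\operatorname{rk}N-1}\beta(N)$, applied to $N=M_F$; (5) reassemble and divide through by $t-1$ to land on the stated identity. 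An alternative, possibly shorter, approach worth trying is to prove the equivalent statement by showing both sides satisfy the same deletion-contraction recursion in the element $0$ (delete $0$ versus contract $0$), reducing to base cases of rank $1$ matroids where $\overline\chi=1$ and $\beta=1$; this avoids explicit Möbius manipulation but still requires care that the $\beta(M_F)$ weights behave correctly under deletion-contraction, so the sign bookkeeping remains the crux either way.
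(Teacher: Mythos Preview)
Your plan has a genuine gap at step~(4), and it is not merely sign bookkeeping. If you carry out steps~(1)--(3) as described---partition the flats of $M$ by whether they contain $0$ and refactor the inner sum over the interval above $F$---the ``contraction factor'' you obtain is a \emph{polynomial} in $t$, essentially $\chi_{M_F}(t)$ or $\overline\chi_{M_F}(t)$ up to a power of $t$. But the target factor $(-t)^{\rk M_F-1}\beta(M_F)$ is a \emph{monomial} in $t$. The identity $\overline\chi_N(1)=(-1)^{\rk N-1}\beta(N)$ you invoke only matches these at $t=1$; it cannot turn one into the other as polynomials. Concretely, for $M_F=U_{2,3}$ one has $\overline\chi_{M_F}(t)=t-2$ while $(-t)^{\rk M_F-1}\beta(M_F)=-t$, so no amount of sign adjustment rescues the step. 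A related problem is that the ``largest flat not containing $0$ below $G$'' in step~(2) is not unique, so the bijection you want does not exist in general.

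What the paper does instead is start from the right-hand side and expand $\beta(M_F)$ using its rank-weighted M\"obius definition
\[
\beta(M_F)=(-1)^{\rk M_F}\sum_{H\supseteq F}\mu(F,H)\bigl(\rk H-\rk F\bigr),
\]
which produces a triple sum over chains $G\subseteq F\subseteq H$ with $0\notin F$. The $(\rk H-\rk F)$ weight is the crucial ingredient you are missing: it is what allows the powers of $t$ to line up. One then kills the $\rk F$ contribution via $\sum_{H\supseteq F}\mu(F,H)=0$, splits the remaining sum as $\sum_{G\subseteq F\subseteq H}-\sum_{G\subseteq F\subseteq H,\,0\in F}$, and collapses each piece with the identity $\sum_{G\subseteq F\subseteq H}\mu(F,H)=[G=H]$ together with the observation that $0\in F$ and $G\subseteq F$ is equivalent to $\overline{G\cup\{0\}}\subseteq F$. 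The result is exactly the known expression $\overline\chi_M(t)=\sum_{0\notin G}\mu(\emptyset,G)\,t^{\rk M-\rk G-1}$. Your deletion--contraction alternative might be made to work, but as stated it is only a hope; the direct M\"obius manipulation above is what actually closes the argument.
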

Throughout this section, we make the convention that the empty flat $F=\emptyset$ satisfies $0\notin F$.
\begin{proof}
    The proof is simply "writing everything out". By definition,
    \[
        \chi_{M^F}(t)=\sum_{G\subset F}\mu(\emptyset,G)t^{\rk F-\rk G},
    \]
    and
    \[
        \beta(M_F)=(-1)^{\rk M_F}\sum_{F\subset H}\mu(F,H)(\rk(H)-\rk(F)),
    \]
    so the right-hand side is
    \[
        \sum_{\substack{G\subset F\subset H\\0\notin F}}\mu(\emptyset,G)\mu(F,H)(\rk H-\rk F)t^{\rk F-\rk G}(-t)^{\rk M-\rk F-1}(-1)^{\rk M-\rk F}
    \]
    which simplifies to
    \[
        -\sum_{\substack{G\subset F\subset H\\0\notin F}}\mu(\emptyset,G)\mu(F,H)(\rk H-\rk F)t^{\rk M-\rk G-1}.
    \]
    Since
    \[
        \sum_{H:F\subset H}\mu(F,H)=0
    \]
    for $F\neq M$, we can further simplify the above summation to
    \[
        -\sum_{\substack{G\subset F\subset H\\0\notin F}}\mu(\emptyset,G)\mu(F,H)\rk Ht^{\rk M-\rk G-1}.
    \]
    We rewrite the above summation to be $-\sum_{G\subset F\subset H}+\sum_{\substack{G\subset F\subset H\\0\in F}}$. Let us consider the first sum. By the fact (see for example \cite[Proposition 3.7.2]{Stanley_2011})
    \[
        \sum_{F:G\subset F\subset H}\mu(F,H)=\left\{
        \begin{aligned}
            1,&\quad G=H,\\
            0,&\quad G\neq H.
        \end{aligned}
        \right.,
    \]
    we get
    \[
        -\sum_{G\subset F\subset H}\mu(\emptyset,G)\mu(F,H)\rk Ht^{\rk M-\rk G-1}=-\sum_{G}\mu(\emptyset,G)\rk G t^{\rk M-\rk G-1}.
    \]
    For the second sum, notice that ``$0\in F$ and $G\subset F$'' is equivalent to $G\cup \{0\}\subset F$, which is also equivalent to $\overline{G\cup\{0\}}\subset F$, where the overline means taking closure. Therefore the second sum is equal to
    \[
        \begin{aligned}
            &\sum_{G\subset H}\mu(\emptyset,G)\rk Ht^{\rk M-\rk G-1}\left(\sum_{\overline{G\cup\{0\}}\subset F\subset H}\mu(F,H)\right)\\
            =&\sum_G\mu(\emptyset,G)\rk(\overline{G}\cup\{0\})t^{\rk M-\rk G-1}\\
            =&\sum_{0\in G}\mu(\emptyset,G)\rk Gt^{\rk M-\rk G-1}+\sum_{0\notin G}\mu(\emptyset,G)(\rk G+1)t^{\rk M-\rk G-1}\\
            =&\sum_G \mu(\emptyset,G)\rk Gt^{\rk M-\rk G-1}+\sum_{0\notin G}\mu(\emptyset,G)t^{\rk M-\rk G-1}.
        \end{aligned}
    \]
    We conclude that the original sum is equal to
    \[
        \sum_{0\notin G}\mu(\emptyset,G)t^{\rk M-\rk G-1}.
    \]
    But by \cite[Corollary 7.2.7]{White_1987}, this is exactly $\overline\chi_M (t)$ if $M$ is loopless.
\end{proof}

\begin{corollary}\label{Closed_c}
    For a loopless matroid $M$, $c_M=-2^{\rk M}\chi_M(1/2)$. In particular, if $M=M_1\oplus M_2$, then $c_M=-c_{M_1}c_{M_2}$.
\end{corollary}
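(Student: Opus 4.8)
\section*{Proof proposal for \texorpdfstring{\cref{Closed_c}}{Corollary}}

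The plan is to prove $c_M=-2^{\rk M}\chi_M(1/2)$ by induction on $\rk M$, substituting the inductive hypothesis into the recursion of \cref{Recursive_c} and then recognizing the resulting summation through the key identity of the lemma immediately preceding this corollary, evaluated at $t=1/2$. The identity was set up precisely so that this substitution closes the recursion.

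For the base case, a loopless matroid of rank $0$ is the empty matroid, and $-2^{0}\chi_\emptyset(1/2)=-1=c_\emptyset$ since $\chi_\emptyset\equiv 1$. For the inductive step, fix a loopless matroid $M$ of rank $d\ge 1$ (in particular non-empty) with a distinguished element $0\in E$. Any flat $F$ with $0\notin F$ is a proper flat of $M$, so $M^F$ is again loopless and has rank $\rk F<d$; hence the inductive hypothesis applies to give $c_{M^F}=-2^{\rk F}\chi_{M^F}(1/2)$, where the value $c_\emptyset=-1$ is the base case. Feeding this into \cref{recursive_formula_for_c} and using $\mathrm{cork}F=\rk M_F$, I would rewrite
\[
    c_M=(-1)^{d-1}\beta(M)-\sum_{0\notin F\neq\emptyset}(-1)^{\rk M_F}\,2^{\rk F}\,\chi_{M^F}(1/2)\,\beta(M_F).
\]

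Next I would match the remaining sum against the key identity. Evaluating that identity at $t=1/2$ and splitting off the $F=\emptyset$ term, which contributes $(-1/2)^{d-1}\beta(M)$, gives
\[
    \overline\chi_M(1/2)-(-1/2)^{d-1}\beta(M)=\sum_{0\notin F\neq\emptyset}\chi_{M^F}(1/2)\,(-1/2)^{\rk M_F-1}\,\beta(M_F).
\]
Since $\rk F=d-\rk M_F$, one checks the routine identity $(-1)^{\rk M_F}2^{\rk F}=-2^{d-1}(-1/2)^{\rk M_F-1}$, so the sum appearing in the expression for $c_M$ equals $-2^{d-1}\bigl(\overline\chi_M(1/2)-(-1/2)^{d-1}\beta(M)\bigr)=-2^{d-1}\overline\chi_M(1/2)+(-1)^{d-1}\beta(M)$. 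Substituting this back, the two $\beta(M)$ contributions cancel and $c_M=2^{d-1}\overline\chi_M(1/2)$; since $\overline\chi_M(t)=\chi_M(t)/(t-1)$ we have $\overline\chi_M(1/2)=-2\chi_M(1/2)$, hence $c_M=-2^{d}\chi_M(1/2)$, as claimed. For the last assertion, $M=M_1\oplus M_2$ gives $\chi_M=\chi_{M_1}\chi_{M_2}$ and $\rk M=\rk M_1+\rk M_2$, so $c_M=-2^{\rk M_1+\rk M_2}\chi_{M_1}(1/2)\chi_{M_2}(1/2)=-c_{M_1}c_{M_2}$.

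The main obstacle is purely bookkeeping: three competing indexing quantities are in play, namely $\rk F$, $\mathrm{cork}F=\rk M_F$, and the exponent $\rk M_F-1$ carried by the factor $(-t)^{\rk M_F-1}$ in the key identity, and one must align them so that the term-by-term comparison produces exactly the global factor $-2^{d-1}$ and so that the boundary contributions at $F=\emptyset$ (the $\beta(M)$ terms, which the identity includes but the recursion does not) cancel rather than reinforce. No new idea is needed beyond the key identity itself, which is exactly what is required to resolve the recursion of \cref{Recursive_c}.
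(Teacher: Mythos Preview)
Your proof is correct and follows essentially the same route as the paper: induction on $\rk M$, feeding the inductive hypothesis into the recursion of \cref{Recursive_c}, and closing it via the preceding lemma evaluated at $t=1/2$. The only difference is presentational: the paper first rewrites the recursion as a single sum $c_M=\sum_{0\notin F}(-1)^{\rk M_F}c_{M^F}\beta(M_F)$ by absorbing the $(-1)^{d-1}\beta(M)$ term as the $F=\emptyset$ summand (using $c_\emptyset=-1$), so that the match with the key identity is term-by-term and no separate $\beta(M)$ cancellation is needed, whereas you keep that boundary term explicit throughout and verify its cancellation directly.
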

\begin{proof}
    If $M=\emptyset$, then both sides are equal to $-1$.
    We only need to verify the right-hand side satisfies the inductive formula for $c_M$ (\cref{Recursive_c}). \cref{Recursive_c} can also be written in the form
    \[
        c_M=\sum_{0\notin F}(-1)^{\rk M_F}c_{M^F}\beta(M_F),
    \]
    so we need to prove
    \[
        -2^{\rk M}\chi_M(1/2)=\sum_{0\notin F}(-1)^{\rk M_F}(-2^{\rk M^F}\chi_{M^F}(1/2))\beta(M_F),
    \]
    equivalently,
    \[
    \begin{aligned}
        \chi_M(1/2)&=\sum_{0\notin F}(-1)^{\rk M_F}(2^{-\rk M_F}\chi_{M^F}(1/2))\beta(M_F)\\
        &=\sum_{0\notin F}\chi_{M^F}(1/2)(-1/2)^{\rk M_F}\beta(M_F).
    \end{aligned}
    \]
    Finally, notice that $\overline{\chi}_M(1/2)=\chi_M(1/2)/(1/2-1)=-2\chi_M(1/2)$, we reduce the above equality to the previous theorem where we evaluate $t$ at $1/2$.
    If $M=M_1\oplus M_2$, then since $2^{\rk M}=2^{\rk M_1}2^{\rk M_2}$ and $\chi_M(t)=\chi_{M_1}(t)\chi_{M_2}(t)$, we conclude that $c_M=-c_{M_1}c_{M_2}$.
\end{proof}

\begin{remark}
    Recall in the statement of \cref{alternating_csm}, $c_M$ is defined to be a huge sum of beta invariants, but this lemma states that this huge sum is equal to $-2^{\rk M}\chi_M(1/2)$.
\end{remark}

We can now calculate the value of $\Eu$.

\begin{lemma}
    For any non-empty loopless matroid $M$,
    \[
        \sum_{F\in \mathcal{L}(M)}t^{\rk F}\chi_{M^F}(1/t)\chi_{M_F}(t)=0.
    \]
\end{lemma}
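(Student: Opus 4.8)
The plan is to expand both characteristic polynomials in terms of the Möbius function of $\mathcal{L}(M)$, so that the left-hand side becomes a triple sum over chains of flats, and then collapse it by two applications of Möbius inversion. First I would record the standard identification of the minors involved as intervals of $\mathcal{L}(M)$: the flats of the localization $M^F$ are exactly the flats $G\in\mathcal{L}(M)$ with $G\subseteq F$, with unchanged rank and Möbius function, and the flats of the contraction $M_F$ are exactly the flats $H\in\mathcal{L}(M)$ with $H\supseteq F$, with $\rk_{M_F}=\rk_M(\cdot)-\rk F$ and Möbius function $\mu(F,\cdot)$. Using the formula $\chi_{M^F}(t)=\sum_{G\subseteq F}\mu(\emptyset,G)t^{\rk F-\rk G}$ recorded above, this gives
\[
    t^{\rk F}\chi_{M^F}(1/t)=\sum_{G\subseteq F}\mu(\emptyset,G)\,t^{\rk G},
    \qquad
    \chi_{M_F}(t)=\sum_{H\supseteq F}\mu(F,H)\,t^{\rk M-\rk H}.
\]

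Multiplying these two expressions and summing over all $F\in\mathcal{L}(M)$ rewrites the left-hand side as a sum over triples of flats $G\subseteq F\subseteq H$, which I would then reorganize by summing over the outer pair $G\subseteq H$ first:
\[
    \sum_{F\in\mathcal{L}(M)}t^{\rk F}\chi_{M^F}(1/t)\chi_{M_F}(t)
    =\sum_{\substack{G,H\in\mathcal{L}(M)\\ G\subseteq H}}\mu(\emptyset,G)\,t^{\rk M+\rk G-\rk H}\sum_{F\colon G\subseteq F\subseteq H}\mu(F,H).
\]
The inner sum $\sum_{F\colon G\le F\le H}\mu(F,H)$ equals $1$ if $G=H$ and $0$ otherwise, by the defining recursion of the Möbius function on the interval $[G,H]$ of $\mathcal{L}(M)$. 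Hence only the diagonal terms $G=H$ survive, and the whole expression collapses to $t^{\rk M}\sum_{G\in\mathcal{L}(M)}\mu(\emptyset,G)$.

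Finally I would apply the same Möbius identity once more, now to the full interval $[\hat 0,\hat 1]$ of $\mathcal{L}(M)$: the sum $\sum_{G\in\mathcal{L}(M)}\mu(\hat 0,G)$ vanishes as soon as $\hat 0\ne\hat 1$, i.e. as soon as $\rk M\ge 1$. Since $M$ is non-empty and loopless, every element of the ground set spans a rank-one flat, so $\rk M\ge 1$ and the sum is $0$, as claimed. I do not anticipate a real obstacle here; the only care required is bookkeeping — correctly identifying the flats, ranks, and Möbius functions of $M^F$ and $M_F$ as sub/quotient intervals of $\mathcal{L}(M)$, and tracking the exponent of $t$ across the substitution $t\mapsto 1/t$. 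One can also phrase the computation as a convolution identity in the Möbius algebra of $\mathcal{L}(M)$, which makes both uses of Möbius inversion transparent.
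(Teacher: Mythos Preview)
Your proof is correct and follows essentially the same route as the paper: expand both characteristic polynomials via the M\"obius function, reorganize the resulting triple sum over $G\subseteq F\subseteq H$ to isolate the inner sum $\sum_{G\le F\le H}\mu(F,H)$, collapse to the diagonal $G=H$, and then use $\sum_{G}\mu(\hat 0,G)=0$ for $\rk M\ge 1$. The only difference is that you spell out the bookkeeping (identification of minors with intervals, tracking the exponent of $t$) more carefully than the paper does.
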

\begin{proof}
    Expanding everything by its definition,
     we get:
     \[
     \begin{aligned}
         &\sum_{G\subset F\subset H}t^{\rk F}\mu(\emptyset,G)(1/t)^{\rk F- \rk G}\mu(F,H)t^{\rk M-\rk H}\\
         =&\sum_{G\subset F\subset G}\mu(\emptyset,G)\mu(F,H)t^{\rk M-\rk H+\rk G}\\
         =&\sum_{G\subset H}\mu(\emptyset,G)t^{\rk M-\rk H+\rk G}(\sum_{F:G\subset F\subset H}\mu(F,H))\\
         =&\sum_G\mu(\emptyset,G)t^{\rk M}=0.
     \end{aligned}
     \]
\end{proof}
\begin{corollary}\label{Closed_Eu}
    For a loopless matroid $M$, $\Eu_M=\chi_M(2)$. In particular, if $M=M_1\oplus M_2$ then $\Eu_{M}=\Eu_{M_1}\Eu_{M_2}$.
\end{corollary}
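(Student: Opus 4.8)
The plan is to verify that the closed formula $M\mapsto \chi_M(2)$ obeys the recursion of \cref{Recursive_Eu}; since that recursion together with the initial condition $\Eu_\emptyset=1$ determines $\Eu_M$ uniquely by induction on $\rk M$, this proves $\Eu_M=\chi_M(2)$. The base case is immediate, as $\chi_\emptyset(t)=1$, so $\chi_\emptyset(2)=1=\Eu_\emptyset$. For the inductive step I would assume $\Eu_{M_F}=\chi_{M_F}(2)$ for every nonempty flat $F$ — legitimate because $M_F$ is again loopless and $\rk M_F=\rk M-\rk F<\rk M$ — and reduce to proving
\[
    \chi_M(2)=\sum_{\emptyset\ne F\in\mathcal{L}(M)} c_{M^F}\,\chi_{M_F}(2).
\]

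The right-hand side I would rewrite using \cref{Closed_c}: since $\rk M^F=\rk F$, we have $c_{M^F}=-2^{\rk F}\chi_{M^F}(1/2)$, so the claim becomes
\[
    \chi_M(2)=-\sum_{\emptyset\ne F\in\mathcal{L}(M)} 2^{\rk F}\chi_{M^F}(1/2)\,\chi_{M_F}(2).
\]
Now I would apply the preceding lemma, i.e.\ the identity $\sum_{F\in\mathcal{L}(M)}t^{\rk F}\chi_{M^F}(1/t)\chi_{M_F}(t)=0$, evaluated at $t=2$. The summand indexed by the empty flat equals $2^0\chi_\emptyset(1/2)\chi_M(2)=\chi_M(2)$, since $M^\emptyset$ is the empty matroid and $M_\emptyset=M$; isolating it from the vanishing sum yields exactly the displayed identity. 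This closes the induction and gives $\Eu_M=\chi_M(2)$.

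The multiplicativity under direct sums then follows at once from the multiplicativity of the characteristic polynomial, $\chi_{M_1\oplus M_2}(t)=\chi_{M_1}(t)\chi_{M_2}(t)$, which gives $\Eu_{M_1\oplus M_2}=\chi_{M_1\oplus M_2}(2)=\chi_{M_1}(2)\chi_{M_2}(2)=\Eu_{M_1}\Eu_{M_2}$. I do not expect a genuine obstacle here; the only thing to be careful about is matching the index sets — the recursion of \cref{Recursive_Eu} runs over all nonempty flats, including $F=M$ itself (for which $M_F$ is empty, $\Eu_{M_F}=1$, and $c_{M^F}=c_M$ is already known from \cref{Closed_c}), whereas the preceding lemma sums over all of $\mathcal{L}(M)$, and it is precisely the $F=\emptyset$ term of that lemma that one solves for. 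It is also worth noting that, unlike in the proof of \cref{Closed_c}, the reduced characteristic polynomial $\overline{\chi}_M$ plays no role in this step.
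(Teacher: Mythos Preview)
Your proposal is correct and follows essentially the same approach as the paper: both verify that $\chi_M(2)$ satisfies the recursion of \cref{Recursive_Eu} by substituting $c_{M^F}=-2^{\rk F}\chi_{M^F}(1/2)$ from \cref{Closed_c} and then invoking the preceding lemma at $t=2$. The only cosmetic difference is that the paper absorbs the $F=\emptyset$ term (using $c_\emptyset=-1$) to state the goal as $\sum_{F\in\mathcal{L}(M)}c_{M^F}\chi_{M_F}(2)=0$, whereas you isolate that term explicitly; the content is identical.
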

\begin{proof}
    If $\rk M=0$, then both sides are equal to 1. We only need to verify $\chi_M(2)$ satisfies the recursive definition of $\Eu_M$. Namely, we want to verify
    \[
        \sum_{F\in\mathcal{L}(M)}c_{M^F}\chi_{M_F}(2)=0,
    \]
    equivalently,
    \[
        \sum_{F\in\mathcal{L}(M)}2^{\rk F}\chi_{M^F}(1/2)\chi_{M_F}(2)=0.
    \]
    This is the previous theorem evaluating $t$ at $2$.
    The case $M=M_1\oplus M_2$ immediately follows from the general fact that the characteristic polynomial for matroids is multiplicative.
\end{proof}

Using this closed formula for $\Eu_M$, we can now prove \cref{Closed_CM}.

\begin{proof}[Proof of \cref{Closed_CM}]
    Since the Euler obstruction function $\Eu_{Y_\mathcal{A}}$ is constant on each stratum $Y_F$ of $Y$, we can write $\Eu_{Y_\mathcal{A}}$ as
    \[
        \Eu_{Y_\mathcal{A}}=\sum_{F\in\mathcal{L}(M)}\Eu_{M_F}\mathbbm{1}_{Y_F}
    \]
    where $\mathbbm{1}_{Y_F}$ is the indicator function of $Y_F$. According to \cite[Theorem 1.15]{Eur_Huh_Larson_2023},
    \[
        c_*(\mathbbm{1}_{Y_\mathcal{A}})=c_{SM}(Y_\mathcal{A})=\sum_{G\in\mathcal{L}(M)}y_G.
    \]
    Applying Macpherson's natural transform $c_*$ to $\Eu_{Y_\mathcal{A}}$, we have
    \begin{equation*}
        \begin{aligned}
            c_{Ma}(Y_\mathcal{A})&=c_*(\Eu_{Y_\mathcal{A}})\\
            &=\sum_{G\in\mathcal{L}(M)}\Eu_{M_G}c_*(\mathbbm{1}_{Y_G})\\
            &=\sum_{G\in\mathcal{L}(M)}\chi_{M_G}(2)\sum_{F\in\mathcal{L}(M^G)}y_F\\
            &=\sum_{F\in\mathcal{L}(F)}\left(\sum_{G:G\geq F} \chi_{M_G}(2)\right)y_F\\
            &=\sum_{F\in\mathcal{L}(M)}2^{\rk M-\rk F}y_F.
        \end{aligned}
    \end{equation*}
\end{proof}

As another corollary, we can now prove \cref{non-positivity}.

\begin{proof}[Proof of \cref{non-positivity}]
    Assume the local Euler obstruction function is everywhere positive. This implies that $\Eu_M(F)>0$ for every flat, or equivalently $\chi_{M_F}(2)>0$ for every flat. We must show $M$ is a Boolean matroid after simplification.

    We do induction on $d$. When $d=2$, the simplification of $M$ is always a uniform matroid. Since $\chi_{U_{n,2}}(2)=2-n<0$ if $n>2$, therefore $n=2$ and $M$ is a Boolean matroid. 

    Suppose the theorem is proved for all loopless matroids of rank strictly smaller than $d$. We want to prove the theorem is true in the case the rank is $d$. Without loss of generality we will replace $M$ by its simplification. In particular, all its rank one flats are singletons $\{i\}$ for $i=0,1,\cdots, n-1$. 

    We first claim that $n=d$. Suppose $n>d$. By relabeling elements, we claim that there are $d$ elements $0,1,\cdots,d-1$ such that
    \begin{center}
        Any three of them are not in the same flat of rank two.
    \end{center}
    Choose any element $0\in E$. By the induction hypothesis, the simplification of $M_{\{0\}}$ is isomorphic to a Boolean matroid. In particular, there are $d-1$ rank two flats $F_k$ that contains $0$, and these flats part $E\backslash\{0\}$. More precisely,
    \[
        \bigsqcup_{k=1}^{d-1} F_k\backslash\{0\}=E\backslash\{0\}.
    \]
    Choose an element $k$ from $F_k\backslash\{0\}$. They must be distinct from each other because of the partition property. I claim any three of them cannot be in one flat of rank two. Suppose there is a rank two flat $G$ that contains $i,j,k$. The element $0$ is not in $G$ because of the partition property. Again, by the inductive hypothesis, there is a rank three flat $H$ such that $$H\backslash\{0\}=F_i\backslash\{0\}\sqcup F_j\backslash\{0\}.$$ Notice that $k\notin H$. Take the intersection of $H$ and $G$, we get a new flat that properly contained in $H$ and $G$. Since $G$ is of rank two, this new flat must be of rank one. But all rank one flats are singletons, and this new flat contains $i$ and $j$, a contraction.

    Because $n>d$, there must be an element $e$ in $E$, lies in one of the rank two flats $F_k$. Without loss of generality, let us assume $e\in F_1$. The above arguments can also apply to $e,2,3,\cdots,d-1$, so we get any there elements of them are not in the same flat of rank two. Let us consider the flats that contain $2$. There are $d$ rank two flats that contain $2$, and each of them contains exactly one of $0,1,3,4,\cdots,d-1$. Since $0,e,2,3,\cdots,d-1$ cannot lie in the same rank two flat, $e$ must be in the flat that contains $1,2$. Intersecting this flat with the flat that contains $0,1,e$, we obtain a rank one flat that contains $1,e$, a contradiction.

    We conclude that there are exactly $d$ elements in the ground set $E$. Therefore, $E$ itself is an independent set, so $M$ must be a Boolean matroid.
\end{proof}

\section{Mircolocal multiplicity}
We will prove \cref{Microlocal_multiplicities} in this section.
On a matroid Schubert variety $Y_\mathcal{A}\subset (\mathbb{P}^1)^{n}$, we are interested in the characteristic cycle of the intersection complex $\mathrm{IC}_{Y_\mathcal{A}}$. Again we choose our stratification to be the one that is defined by the group orbits. Therefore, the characteristic cycle of $\mathrm{IC}_{Y_\mathcal{A}}$ is a formal sum of the conic Lagrangian cycles $T^*_{Y_F}(\mathbb{P}^1)^{n}$, where $Y_F$ is the closure of $U^F$ and $U^F$ is the $V$-orbit that corresponds to the flat $F$. We can now write the characteristic cycle of $\IC_{Y_\mathcal{A}}$ in the form
\begin{equation}\label{def_multiplicity}
    \mathrm{CC}(\mathrm{IC}_{Y_\mathcal{A}})=\sum_{F} m_M(F) T^*_{Y_F}(\mathbb{P}^1)^{n}.
\end{equation}

To calculate these coefficients, we use the $\Eu$ isomorphism described in section 2, and translate \cref{def_multiplicity} into an equation of constructible functions. For the left-hand side, recall that for a constructible complex $\mathcal{F}^\bullet$ on an algebraic variety $X$, the associated constructible function is defined by taking pointwise Euler characteristic number:
\begin{equation*}
    p\to \chi(\mathcal{F}^\bullet_p).
\end{equation*}

For the right-hand side, the $\Eu$ isomorphism turns every $T^*_{Y_F}(\mathbb{P}^1)^{n}$ on the other side of \cref{def_multiplicity} into $(-1)^{\dim Y_F}\Eu_{Y_F}$. As a result, we obtain the following equation.

\begin{equation*}
    \chi({\mathrm{IC}_{Y_\mathcal{A},p}})=\sum_{F:p\in Y_F} m_M(F)(-1)^{\dim Y_F} \Eu_{Y_F}(p),
\end{equation*}
here $p$ is any point in $Y_\mathcal{A}$.

By \cite[Lemma 3.8]{ELIAS201636}, the number $\chi((\IC_{Y_\mathcal{A}})_p)$ for a point $p\in U^G$ is 
\begin{equation*}
    \chi({\mathrm{IC}_{Y_\mathcal{A},p}})=(-1)^{\dim Y_\mathcal{A}}P_{M_G}(1)=(-1)^{\mathrm{rk}M}P_{M_G}(1),
\end{equation*}
where $P_M(t)$ refers to the Kazhdan-Lusztig polynomial of $M$. We need the sign $(-1)^{\mathrm{rk}M}$ because we need to shift the complex by the dimension. The condition that $p\in Y_F$ is equivalent to that $F\geq G$. Therefore, we can rewrite the equation as
\begin{equation*}
    (-1)^{\mathrm{rk}M}P_{M_G}(1)=\sum_{F\geq G} m_M(F)(-1)^{\dim Y_F} \Eu_{Y_F}(p).
\end{equation*}
Moreover, $\dim Y_F=\mathrm{rk}(F)$, and $\Eu_{Y_F}(p)=\Eu_{M^F}(G)=\Eu_{M^F_G}$. We obtain a purely combinatorial equation:
\begin{equation}\label{def_multiplicity2}
    (-1)^{\mathrm{rk}M}P_{M_G}(1)=\sum_{F\geq G} m_M(F)(-1)^{\mathrm{rk}F} \Eu_{M^F_G}.
\end{equation}
We can also define the microlocal multiplicity for a general matroid $M$ by the above equation. 
\begin{lemma}
    The multiplicity number $m_M(F)$ only depends on the matroid $M_F$. In fact, $m_M(F)=m_{M_F}(\emptyset)$.
\end{lemma}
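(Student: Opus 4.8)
The plan is to compare the linear system \eqref{def_multiplicity2} that defines the numbers $m_M(\cdot)$ with the analogous system for the contraction $M_F$. Throughout I would use the canonical isomorphism between $\mathcal{L}(M_F)$ and the interval $\{H\in\mathcal{L}(M):H\supseteq F\}$, identifying a flat $H\supseteq F$ of $M$ with the corresponding flat of $M_F$; under this identification $\mathrm{rk}_{M_F}(H)=\mathrm{rk}_M(H)-\mathrm{rk}_M(F)$. First I would record the minor identities that make the bookkeeping work: for flats $F\subseteq G\subseteq H$ of $M$ one has, inside $M_F$, that $(M_F)_G=M_G$ and $(M_F)^H_G=M^H_G$ (restriction and contraction commute, and contracting a flat of a contraction amounts to contracting by the larger flat), hence $P_{(M_F)_G}(1)=P_{M_G}(1)$; and $\mathrm{rk}(M_F)=\mathrm{rk}(M)-\mathrm{rk}(F)$.

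Next I would write \eqref{def_multiplicity2} for the matroid $M_F$ at the flat $G$, for each $G\supseteq F$:
\[
    (-1)^{\mathrm{rk}(M_F)}P_{(M_F)_G}(1)=\sum_{H\geq G}m_{M_F}(H)\,(-1)^{\mathrm{rk}_{M_F}H}\,\Eu_{(M_F)^H_G}.
\]
Substituting the identities above and multiplying through by $(-1)^{\mathrm{rk}F}$ turns this into
\[
    (-1)^{\mathrm{rk}M}P_{M_G}(1)=\sum_{H\geq G}m_{M_F}(H)\,(-1)^{\mathrm{rk}H}\,\Eu_{M^H_G},
\]
which is literally equation \eqref{def_multiplicity2} for $M$ at $G$ with each $m_M(H)$ replaced by $m_{M_F}(H)$. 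Hence the two families $\{m_M(H)\}_{H\supseteq F}$ and $\{m_{M_F}(H)\}_{H\supseteq F}$ solve one and the same subsystem of \eqref{def_multiplicity2}, namely the equations indexed by the flats $G\supseteq F$, each of which involves only the unknowns indexed by flats $H\geq G$.

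It then remains to see that this subsystem has a unique solution, which will follow from triangularity. In the equation indexed by $G$, the coefficient of the unknown at $G$ is $(-1)^{\mathrm{rk}G}\Eu_{M^G_G}$, and $M^G_G$ is the rank-$0$ matroid, so $\Eu_{M^G_G}=1$ (for instance by \cref{Closed_Eu}) and this coefficient equals $\pm1$; every other unknown appearing in that equation is indexed by a flat $H>G$. Thus the coefficient matrix $\big((-1)^{\mathrm{rk}H}\Eu_{M^H_G}\big)_{F\leq G\leq H}$ is triangular with $\pm1$ on the diagonal, hence invertible, and the system can be solved uniquely by descending induction on the poset of flats $\geq F$, starting from the maximal flat. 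Therefore $m_M(H)=m_{M_F}(H)$ for every flat $H\supseteq F$; taking $H=F$ and noting that $F$ is the minimum (empty) flat of $M_F$ gives $m_M(F)=m_{M_F}(\emptyset)$.

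I expect the minor calculus of the first two steps to be the only point requiring genuine care: one must check that restricting and contracting the relevant flats really commute in the stated way, and that ranks and Kazhdan--Lusztig polynomials transport correctly across the interval isomorphism $\mathcal{L}(M_F)\cong\{H\in\mathcal{L}(M):H\supseteq F\}$. Once that is in place, both the passage between the two systems and the triangular solve are purely formal.
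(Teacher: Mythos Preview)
Your argument is correct and is essentially the same approach as the paper's: both identify the subsystem of \eqref{def_multiplicity2} indexed by flats $G\geq F$ with the full system for $M_F$, and then invoke uniqueness (equivalently, the triangular recursive structure) to conclude $m_M(H)=m_{M_F}(H)$ for all $H\geq F$. The paper compresses this into the single phrase ``satisfy the same recursive definition,'' whereas you spell out the minor identities $(M_F)_G=M_G$, $(M_F)^H_G=M^H_G$, the rank shift, and the unit-triangular shape of the coefficient matrix; the added care is welcome but the underlying idea is identical.
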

\begin{proof}
    Evaluating \cref{def_multiplicity2} with $M=M, G=F$ and $M=M_F,G=\emptyset$, we see that both $m_M(F)$ and $m_{M_F}(\emptyset)$ satisfy the same recursive definition, so they must be equal to each other.
\end{proof}
Following the same convention in this paper, we define $m_M=m_M(\emptyset)$. Now the definition of $m_M$ can be clearly written as the unique solution to the following equation for every matroid $M$:
\begin{equation}\label{def_multiplicity_2}
    (-1)^{\mathrm{rk}M}P_{M}(1)=\sum_{F} m_{M_F}(-1)^{\mathrm{rk}F} \Eu_{M^F}.
\end{equation}
For example, $m_\emptyset=1$ by setting $M=\emptyset$ in this equation. We prove \cref{Microlocal_multiplicities} by solving this linear system. Actually, \cref{Recursive_Eu} shows exactly the ``inverse matrix" of $\Eu_{M^F}$ is $c_{M^F}$.
\begin{proof}[Proof of \cref{Microlocal_multiplicities}]
    Utilizing \cref{def_multiplicity_2}, we have 
    \[
        (-1)^{d-1} \sum_{F}c_{M^F}P_{M_F}(1)=(-1)^{d-1} \sum_{F}c_{M^F}(-1)^{\mathrm{cork}F}\sum_{G:F\subset G}m_{M_G}(-1)^{\mathrm{rk}G-\mathrm{rk}F} \Eu_{M^G_F}.
    \]
    Here we change the order of summation,
    \[
        (-1)^{d-1} \sum_{G}(-1)^{\mathrm{cork}G}m_{M_G}\sum_{F:F\subset G}c_{M^F} \Eu_{M^G_F},
    \]
    we can see the right hand side is very close to \cref{Non_Recursive_Eu}. We only need to treat the case $G=\emptyset$ and $F=\emptyset$ separately. We do it first for $G$ and get
    \[
        (-1)^{d-1} \sum_{G\neq\emptyset}(-1)^{\mathrm{cork}G}m_{M_G}\sum_{F\subset G}c_{M^F} \Eu_{M^G_F}+(-1)^{d-1}(-1)^{\mathrm{rk}M}m_M c_\emptyset.
    \]
    Recall that $d=\mathrm{rk}M$ and $c_\emptyset=-1$. Then we treat the case $F=\emptyset$:
    \[
        m_M+(-1)^{d-1} \sum_{G\neq\emptyset}(-1)^{\mathrm{cork}G}m_{M_G}(c_\emptyset\Eu_{M^G}+\sum_{\emptyset\neq F\subset G}c_{M^F} \Eu_{M^G_F}).
    \]
    Again, $c_\emptyset=-1$. We are now in a position to apply \cref{Non_Recursive_Eu} with $M=M^G$:
    \[
        m_M+(-1)^{d-1} \sum_{G\neq\emptyset}(-1)^{\mathrm{cork}G}m_{M_G}(-\Eu_{M^G}+\Eu_{M^G}),
    \]
    and this is clearly $m_M$. Combine with \cref{Closed_c}, we finish the proof.
\end{proof}

This closed formula for $m_M$ also shows that it is well-behaved under the direct sum.

\begin{corollary}\label{productive_m}
    For any two matroids $M_1$ and $M_2$, $m_{M_1\oplus M_2}=m_{M_1}m_{M_2}$.
\end{corollary}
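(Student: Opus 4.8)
The plan is to read the claim off the closed formula of \cref{Microlocal_multiplicities}, exploiting that every ingredient of that formula is multiplicative under direct sums. Write $d_i=\rk M_i$, so $d\coloneqq\rk(M_1\oplus M_2)=d_1+d_2$. First I would record the standard description of the lattice of flats of a direct sum: the assignment $(F_1,F_2)\mapsto F_1\sqcup F_2$ is a bijection $\mathcal{L}(M_1)\times\mathcal{L}(M_2)\xrightarrow{\sim}\mathcal{L}(M_1\oplus M_2)$, and under it one has $\rk(F_1\sqcup F_2)=\rk F_1+\rk F_2$, $(M_1\oplus M_2)^{F_1\sqcup F_2}=M_1^{F_1}\oplus M_2^{F_2}$, and $(M_1\oplus M_2)_{F_1\sqcup F_2}=(M_1)_{F_1}\oplus(M_2)_{F_2}$.

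Next I would invoke multiplicativity of the remaining terms appearing in \cref{Microlocal_multiplicities}: the signs and powers of $2$ split trivially as $(-1)^{d}=(-1)^{d_1}(-1)^{d_2}$ and $2^{\rk F}=2^{\rk F_1}2^{\rk F_2}$; the characteristic polynomial satisfies $\chi_{N_1\oplus N_2}(t)=\chi_{N_1}(t)\chi_{N_2}(t)$ (already used in the proof of \cref{Closed_c}); and the Kazhdan--Lusztig polynomial satisfies $P_{N_1\oplus N_2}(t)=P_{N_1}(t)P_{N_2}(t)$, see \cite{ELIAS201636}. Substituting all of these into the formula of \cref{Microlocal_multiplicities} rewrites the single sum over $\mathcal{L}(M_1\oplus M_2)$ as a double sum over $\mathcal{L}(M_1)\times\mathcal{L}(M_2)$ whose summand is a product of a factor depending only on $F_1$ and one depending only on $F_2$; the double sum then factors:
\begin{align*}
    m_{M_1\oplus M_2}&=\left((-1)^{d_1}\sum_{F_1\in\mathcal{L}(M_1)}2^{\rk F_1}\chi_{M_1^{F_1}}(1/2)\,P_{(M_1)_{F_1}}(1)\right)\\
    &\qquad\times\left((-1)^{d_2}\sum_{F_2\in\mathcal{L}(M_2)}2^{\rk F_2}\chi_{M_2^{F_2}}(1/2)\,P_{(M_2)_{F_2}}(1)\right)=m_{M_1}\,m_{M_2}.
\end{align*}

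The only non-formal input is the multiplicativity $P_{N_1\oplus N_2}=P_{N_1}P_{N_2}$ of the Kazhdan--Lusztig polynomial, which is the step I would flag as the (mild) crux; everything else is bookkeeping with the flat lattice. If one wished to avoid that citation, one could instead argue by induction on $\rk M$ straight from the defining relation \cref{def_multiplicity_2}, using that $\Eu$ is multiplicative by \cref{Closed_Eu} and splitting flats as above to see that $m_{M_1}m_{M_2}$ satisfies the same recursion; but the left-hand side of \cref{def_multiplicity_2} still carries $P_M(1)$, so this variant consumes the same fact about $P$ and brings no real economy. I do not anticipate any genuine difficulty.
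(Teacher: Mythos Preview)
Your proof is correct and essentially identical to the paper's: both factor the closed formula from \cref{Microlocal_multiplicities} over the product decomposition $\mathcal{L}(M_1\oplus M_2)\cong\mathcal{L}(M_1)\times\mathcal{L}(M_2)$, using multiplicativity of the characteristic polynomial (the paper packages this via $c_{M^F}=-c_{(M_1)^{F_1}}c_{(M_2)^{F_2}}$ from \cref{Closed_c}, which is the same content) together with $P_{N_1\oplus N_2}=P_{N_1}P_{N_2}$ from \cite[Proposition~2.7]{ELIAS201636}. The only difference is notational: you work with $2^{\rk F}\chi_{M^F}(1/2)$ directly, the paper with the equivalent $-c_{M^F}$.
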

\begin{proof}
    A flat of $M=M_1\oplus M_2$ is of the form $F_1\cup F_2$, where $F_i\subset M_i$ is a flat, $i=1,2$. We know from \cref{Closed_c} that $c_{M^F}=-c_{(M_1)^{F_1}}c_{(M_2)^{F_2}}$, and from \cite[Proposition 2.7]{ELIAS201636} that $P_{M_F}(t)=P_{(M_1)_{F_1}}(t)P_{(M_2)_{F_2}}(t)$. Putting all these together, we get
    \[
        m_M=(-1)^{\mathrm{rk}{M_1}+\mathrm{rk}{M_2}-1}\sum_{F_1,F_2}-c_{(M_1)^{F_1}}c_{(M_2)^{F_2}}P_{(M_1)_{F_1}}(1)P_{(M_2)_{F_2}}(1)
    \]
    which is exactly the product of 
    \[
        (-1)^{\mathrm{rk}M_i-1} \sum_{F_i}c_{(M_i)^{F_i}}P_{(M_i)_{F_i}}(1),
    \]
    $i=1,2$.
\end{proof}

\begin{remark}
    This corollary is clearer when we take the geometric point of view. If $\mathcal{A}=\mathcal{A}_1\oplus \mathcal{A}_2$, then $Y_{\mathcal{A}}\cong Y_{\mathcal{A}_1}\times Y_{\mathcal{A}_2}$, so
    \[
        \IC_{Y_{\mathcal{A}}}\cong\IC_{Y_{\mathcal{A}_1}}\boxtimes \IC_{Y_{\mathcal{A}_2}}.
    \]
    As a consequence, there is a similar exterior product relation between characteristic cycles, see \cite{Saito_2017}. In particular, the coefficient $m_{M_1\oplus M_2}$ of the conormal variety of the all-infinity point must be the product of $m_{M_i}$, $i=1,2$.
\end{remark}

By \cref{def_multiplicity} and the fact $\IC_{Y_\mathcal{A}}$ is a perverse sheaf, plus the fact that the characteristic cycle of a perverse sheaf is always effective, we conclude that $m_M\geq 0$ when $M$ is realizable over $\mathbb{C}$. However, it is not easy to tell the positivity of the microlocal multiplicity $m$ of a general matroid $M$. Naturally, we propose the following conjecture:
\begin{conjecture}
    For a loopless matroid $M$, $m_M$ is always nonnegative. 
    Moreover, $m_M=0$ if and only if $M$ has a Boolean summand, that is, $M$ is a direct sum of a Boolean matroid and another arbitrary matroid.
\end{conjecture}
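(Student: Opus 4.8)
We sketch a possible route toward the conjecture; throughout, having a \emph{coloop} and having a \emph{Boolean summand of positive rank} are used interchangeably, since a coloop $e$ is the same as a direct summand $\{e\}$ that is the rank-one Boolean matroid. The ``if'' half of the second sentence is already established: if $e$ is a coloop then $M=\{e\}\oplus(M\setminus e)$, so \cref{productive_m} and the direct evaluation $m_{\{e\}}=0$ of \cref{Microlocal_multiplicities} force $m_M=0$. Conversely, \cref{productive_m} reduces the entire conjecture to one assertion: \emph{if $M$ is connected, loopless and has at least two elements, then $m_M>0$}. Indeed, a loopless matroid with no Boolean summand is a direct sum of connected coloop-free matroids, each necessarily on at least two elements, so its $m$ would be a product of strictly positive integers; and any matroid with a coloop has $m=0$. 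So both the nonnegativity and the description of the vanishing locus follow from the connected case.

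To see what must be shown, combine \cref{Closed_c} with the standard sign pattern $(-1)^{\rk N}\chi_N(1/2)>0$ for loopless $N$ --- equivalently $|c_N|=(-1)^{\rk N-1}c_N>0$ --- and with the nonnegativity of the coefficients of Kazhdan--Lusztig polynomials. One then rewrites
\[
    m_M=\sum_{F\in\mathcal{L}(M)}(-1)^{\rk M_F}\,|c_{M^F}|\,P_{M_F}(1),
\]
an alternating sum of products of \emph{nonnegative} combinatorial quantities, so the conjecture is a cancellation statement: the signs $(-1)^{\rk M_F}$ must not defeat the positive contributions unless $M$ splits off a coloop.

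My first line of attack would be to categorify $m_M$. When $M$ is realizable, the multiplicity of $\mathrm{CC}(\IC_{Y_\mathcal{A}})$ along the conormal of the all-infinity point is, up to the sign $(-1)^{\rk M}$, the reduced Euler characteristic of the normal Morse datum of $\IC_{Y_\mathcal{A}}$ there --- a local vanishing cohomology group, hence an honest nonnegative integer because $\IC_{Y_\mathcal{A}}$ is perverse. The plan is to build a purely combinatorial model of this group for an arbitrary loopless matroid out of the combinatorial intersection cohomology modules $IH(M_F)$ of \cite{braden2023singularhodgetheorycombinatorial} and their module structure over the augmented Chow ring: one seeks a short complex assembled from the $IH(M_F)$'s whose graded Euler characteristic is $(-1)^{\rk M}m_M$, and whose cohomology is concentrated in a single degree by the singular Hodge theory package (Poincar\'e duality, Hard Lefschetz, the Hodge--Riemann relations), so that $m_M$ is the dimension of that cohomology and hence $\ge 0$. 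A natural guess for it is the cokernel of a Lefschetz multiplication on a top graded piece of $IH(M)$ or of the augmented Chow ring, and identifying surjectivity of that map with the presence of a coloop would also yield the equality case.

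The likely fallback is an induction on $\rk M$ built on the defining relation \cref{def_multiplicity_2}, $(-1)^{\rk M}P_M(1)=\sum_F m_{M_F}(-1)^{\rk F}\Eu_{M^F}$ with $\Eu_{M^F}=\chi_{M^F}(2)$ by \cref{Closed_Eu}, staying inside connected matroids via \cref{productive_m} and feeding in a deletion/contraction analysis together with known positivity --- nonnegativity of $P_M$, log-concavity of $\chi_M$, symmetry and positivity of the $Z$-polynomial $Z_M(t)=\sum_F t^{\rk F}P_{M_F}(t)$, and perhaps a Kazhdan--Lusztig--Stanley identity tying $m_M$ to inverse Kazhdan--Lusztig polynomials; the base cases (rank $\le 2$, uniform matroids) are immediate from \cref{Microlocal_multiplicities}. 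The main obstacle is the familiar one behind log-concavity, Kazhdan--Lusztig nonnegativity and the top-heavy theorem: there is no variety when $M$ is not realizable, and --- as \cref{non-positivity} already shows, where even $\chi_{M^F}(2)$ changes sign --- no monotone recursion for $m_M$ is visible, so one is pushed either into the combinatorial Hodge theory machinery or toward a genuinely new positive formula for the alternating sum above. Compounding this, $m_M$ is a microlocal invariant and in a precise sense lies one level deeper than $P_M$ itself, so even guessing the right graded module is nontrivial; and the equality half demands certifying the coloop as the \emph{unique} mechanism producing total cancellation, which a soft dimension count will not see without extra input such as an explicit basis or a Hard Lefschetz collapse of an Euler characteristic to a dimension.
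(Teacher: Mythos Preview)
The statement is a \emph{conjecture}: the paper does not prove it and does not claim to. The only part the paper establishes is the ``if'' direction of the vanishing criterion, via \cref{productive_m} and the computation $m_{U_{1,1}}=0$; you reproduce exactly this argument in your first paragraph, so on that piece you and the paper agree.

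Everything after your first paragraph is a research outline, not a proof. Your reduction to the connected case is correct and useful, and the sign rewriting $m_M=\sum_F(-1)^{\rk M_F}|c_{M^F}|P_{M_F}(1)$ is valid (the positivity $(-1)^{\rk N}\chi_N(1/2)>0$ for nonempty loopless $N$ follows from \cref{alternating_csm}, since that lemma exhibits $(-1)^{\rk N-1}c_N$ as a sum of products of beta invariants). But the two proposed attacks --- a categorification via combinatorial intersection cohomology with a Hard Lefschetz collapse, and an induction through \cref{def_multiplicity_2} --- are aspirational: you do not construct the complex, identify the Lefschetz operator, or supply the inductive inequality. These are plausible directions, and you are candid that the obstacles are the same as those behind Kazhdan--Lusztig nonnegativity and top-heaviness, but nothing here closes the gap. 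Since the paper offers no proof to compare against, the honest summary is that both the paper and your proposal leave the nonnegativity and the ``only if'' direction open.
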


Let us end the paper with a corollary about the irreducibility of the characteristic cycle.

\begin{corollary}
    The characteristic cycle of the intersection complex of a matroid Schubert variety of a simple realizable matroid $M=M(\mathcal{A})$ is irreducible if and only if $M$ is a Boolean matroid.
\end{corollary}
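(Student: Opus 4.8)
The plan is to translate the irreducibility question into a vanishing statement for microlocal multiplicities and then import the combinatorial argument of \cref{non-positivity}. First I would record that, by \eqref{def_multiplicity} together with the lemma identifying $m_M(F)$ with $m_{M_F}$,
\[
    \mathrm{CC}(\IC_{Y_\mathcal{A}})=\sum_{F\in\mathcal{L}(M)} m_{M_F}\, T^*_{Y_F}(\mathbb{P}^1)^{n},
\]
where the top flat $E$ contributes $m_{M_E}=m_\emptyset=1$ and, because $M$ is realizable over $\mathbb{C}$, every coefficient $m_{M_F}$ is a nonnegative integer. Since the conormal varieties $T^*_{Y_F}(\mathbb{P}^1)^{n}$ attached to distinct flats are pairwise distinct irreducible conic Lagrangians, this cycle is irreducible precisely when $m_{M_F}=0$ for every flat $F\subsetneq E$. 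So the corollary reduces to: a simple realizable matroid $M$ has $m_{M_F}=0$ for all proper flats $F$ if and only if $M$ is Boolean.

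The ``if'' direction is immediate: when $M$ is Boolean, $V=\mathbb{C}^{n}$, so $Y_\mathcal{A}=(\mathbb{P}^1)^{n}$ is smooth, $\IC_{Y_\mathcal{A}}$ is a shift of the constant sheaf, and its characteristic cycle is the zero section, which is irreducible; alternatively, every minor $M_F$ with $F\subsetneq E$ is a positive-rank Boolean matroid and hence has $m_{M_F}=0$ by \cref{productive_m} and the evaluation $m_{U_{1,1}}=0$ of \cref{Microlocal_multiplicities}.

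For the ``only if'' direction I would induct on $d=\rk M$. I would first note that, since the characteristic polynomial and the Kazhdan--Lusztig polynomial of a matroid depend only on its lattice of flats, the formula of \cref{Microlocal_multiplicities} shows $m_N$ depends only on the simplification of $N$; this allows free passage to simplifications. The cases $d\le 1$ are trivial, and $d=2$ is the evaluation $m_{U_{2,n}}=n-2$ from \cref{Microlocal_multiplicities}, which forces $n=2$. For $d\ge 3$ I fix an atom $\{i\}$ of $M$; the flats $\bar F$ of the contraction $M_{\{i\}}$ are exactly the flats $F$ of $M$ with $\{i\}\subseteq F$, and then $(M_{\{i\}})_{\bar F}=M_F$, so the hypothesis gives $m_{(M_{\{i\}})_{\bar F}}=0$ for every proper flat $\bar F$ of $M_{\{i\}}$. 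As $M_{\{i\}}$ is loopless and realizable of rank $d-1$, its simplification is a simple realizable matroid of rank $d-1$ with the same microlocal multiplicities, so by the induction hypothesis it is Boolean. Thus for every atom $\{i\}$ the contraction $M_{\{i\}}$ has Boolean simplification --- exactly the input driving the induction step in the proof of \cref{non-positivity} --- and the combinatorial argument given there then forces $\#E=d$, so $E$ is independent and $M$ is Boolean.

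The main obstacle I anticipate is not conceptual but one of care: confirming that the microlocal multiplicity is genuinely a function of the simplification (so the possibly non-simple contractions $M_{\{i\}}$ cause no trouble), and checking cleanly that irreducibility of $\mathrm{CC}(\IC_{Y_\mathcal{A}})$ descends to the analogous statement for each $M_{\{i\}}$ so that the combinatorial core of \cref{non-positivity} can be reused verbatim.
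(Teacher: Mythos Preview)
Your proposal is correct, but the paper's own argument is shorter and conceptually different. You translate irreducibility into the vanishing $m_{M_F}=0$ for every proper flat and then redo the induction of \cref{non-positivity} with this hypothesis in place of $\chi_{M_F}(2)>0$. The paper instead stays on the constructible-function side: if $\mathrm{CC}(\IC_{Y_\mathcal{A}})=T^*_{Y_\mathcal{A}}(\mathbb{P}^1)^{n}$, then applying the $\Eu$ isomorphism directly gives $\Eu_M(G)=P_{M_G}(1)$ for every flat $G$, which is strictly positive because Kazhdan--Lusztig polynomials have nonnegative coefficients and constant term $1$; \cref{non-positivity} then applies as a black box. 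Your route has the advantage of never mentioning $\Eu$ and making the role of the multiplicities explicit, at the cost of re-running the induction and needing the auxiliary check that $m$ depends only on the simplification; the paper's route is a two-line reduction but relies on the $\Eu$--CC correspondence and the positivity of $P_M(1)$, neither of which you invoke.
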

\begin{proof}
    If $\mathrm{CC}(\IC_{Y_\mathcal{A}})$ is irreducible, then 
    \[
        \mathrm{CC}(\IC_{Y_\mathcal{A}})=T^*_{Y_\mathcal{A}}(\mathbb{P}^1)^{n}.
    \]
    Translate the equation into an equation of constructible functions, we get
    \[
        (-1)^{\mathrm{rk}M}P_{M_G}(1)=(-1)^{\mathrm{rk}M}m_M \Eu_{Y_\mathcal{A}}(p)
    \]
    for $p\in U^G$. Therefore,
    \[
        \Eu_M(G)=\frac{P_{M_G}(1)}{m_M}>0
    \]
    for every flat $G$. This implies $M$ must be a Boolean matroid by \cref{non-positivity}.
\end{proof}

\bibliographystyle{amsalpha}
\bibliography{main.bib}

\end{document}